\numberwithin{equation}{section}
\theoremstyle{plain}
\newtheorem{theorem}{Theorem}[section]
\newtheorem{open}{Open Problem}[section]
\newtheorem{lemma}{Lemma}[section]
\theoremstyle{remark}
\newtheorem{remark}{Remark}[section]
\DeclareMathOperator{\td}{d\mspace{-1mu}}
\DeclareMathOperator{\tdq}{d_{\text{$q$}}\mspace{-1mu}}
\begin{document}

\title{Some integral inequalities on time scales}

\author[L. Yin]{Li Yin}
\address[L. Yin]{Department of Mathematics, Binzhou University, Binzhou City, Shandong Province, 256603, China}
\email{\href{mailto: L. Yin<yinli_79@163.com>}{yinli\_79@163.com}}

\author[F. Qi]{Feng Qi}
\address[F. Qi]{Department of Mathematics, College of Science, Tianjin Polytechnic University, Tianjin City, 300160, China; School of Mathematics and Informatics, Henan Polytechnic University, Jiaozuo City, Henan Province, 454010, China}
\email{\href{mailto: F. Qi<qifeng618@gmail.com>}{qifeng618@gmail.com}, \href{mailto: F. Qi<qifeng618@hotmail.com>}{qifeng618@hotmail.com}, \href{mailto: F. Qi<qifeng618@qq.com>}{qifeng618@qq.com}}
\urladdr{\url{http://qifeng618.wordpress.com}}

\subjclass[2010]{Primary 26D15, 26E70; Secondary 93C70}

\keywords{integral inequality; calculus of time scales; delta differentiability; H\"older inequality; unified form}

\begin{abstract}
In this paper, some new integral inequalities on time scales are presented by using elementarily analytic methods in calculus of time scales.
\end{abstract}

\thanks{This paper was typeset using\AmS-\LaTeX}

\maketitle

\section{ Introduction}

The following problem was posed by the second author in the preprint~\cite{Qi-several-RGMIA-99-1042} and its formally published version~\cite{Qi-several}.
\begin{open}\label{Qi-int-ineq-open}
Under what conditions does the inequality
\begin{equation}\label{Yin-Qi-ineq-open}
\int_a^b{[f(x)]^p\td x\ge}\biggl[\int_a^b{f(x)\td x}\biggr]^{p-1}
\end{equation}
hold for $p>1$?
\end{open}

Since then, this problem has been stimulating much interest of many mathematicians. In recent years, we have collected over forty articles devoted to answering and generalizing this open problem and to applying inequalities of this type. For potential availability to interested readers, we list the collection as references of this paper.
\par
In~\cite[p.~124, Theorem~C]{akk-several}, M. Akkouchi proved the following result.

\begin{theorem}\label{Akkouchi-thm}
Let $[a,b]$ be a closed interval of $\mathbb{R}$ and $p>1$. If $f(x)$ is a continuous function on $[a,b]$ such that $f(a)\ge0$ and $f'(x)\ge{p}$ on $(a,b)$, then
\begin{equation}\label{Yin-Qi-ineq-akk}
\int_a^b{[f(x)]^{p+2}\td x\ge}\frac{1}{{(b-a)^{p-1}}}\biggl[\int_a^b{f(x)\td x}\biggr]^{p+1}.
\end{equation}
\end{theorem}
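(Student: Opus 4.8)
The plan is to fix $[a,b]$ and prove the stronger, scalable fact that a difference function attached to a moving endpoint $t$ is monotone. First I would record what the hypotheses give: integrating $f'(x)\ge p$ and using $f(a)\ge0$ yields $f(x)\ge f(a)+p(x-a)\ge p(x-a)\ge0$ on $[a,b]$, so $f$ is nonnegative and nondecreasing, and the same hypotheses are inherited by every subinterval $[a,t]$. Writing $I(t)=\int_a^t[f(x)]^{p+2}\td x$ and $J(t)=\int_a^t f(x)\td x$, it therefore suffices to show that
\[
G(t):=(t-a)^{p-1}I(t)-[J(t)]^{p+1}\ge0,\qquad t\in(a,b],
\]
since $G(b)\ge0$ is exactly~\eqref{Yin-Qi-ineq-akk}. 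As $G(a)=0$, I would prove the pointwise derivative inequality $G'(t)\ge0$ on $(a,b)$, where
\[
G'(t)=(p-1)(t-a)^{p-2}I(t)+(t-a)^{p-1}[f(t)]^{p+2}-(p+1)[J(t)]^{p}f(t).
\]

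The crux is to inject the derivative hypothesis through two sharp estimates rather than through bare monotonicity. Integration by parts gives $J(t)=(t-a)f(t)-\int_a^t(x-a)f'(x)\td x$, and $f'\ge p$ then yields the upper bound
\[
J(t)\le(t-a)f(t)-\tfrac{p}{2}(t-a)^2 .
\]
Dually, the mean-value bound $f(\tau)\ge f(t)-p(t-\tau)\ge0$ for $\tau\in[a,t]$ integrates to the lower bound
\[
I(t)\ge\tfrac{1}{p(p+3)}\bigl\{[f(t)]^{p+3}-[f(t)-p(t-a)]^{p+3}\bigr\}.
\]
It is worth stressing that the naive estimate $J(t)\le(t-a)f(t)$ --- equivalently, trying to finish by a single H\"older step or by Chebyshev's integral inequality --- is too weak: it delivers~\eqref{Yin-Qi-ineq-akk} only under $\int_a^b f\ge(b-a)^2$, which the hypotheses guarantee only for $p\ge2$. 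The correction term $-\tfrac{p}{2}(t-a)^2$ is precisely what rescues the range $1<p<2$.

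Substituting the two estimates into the formula for $G'(t)$ and normalizing by the scale-invariant parameter $c:=f(t)/\bigl(p(t-a)\bigr)\ge1$ --- each term then becomes a fixed power of $t-a$ times a function of $c$ and $p$, and that common power cancels --- collapses $G'(t)\ge0$ to the single elementary inequality
\[
\tfrac{p(p-1)}{p+3}\bigl[c^{p+3}-(c-1)^{p+3}\bigr]+p\,c^{p+2}\ge(p+1)\,c\,\bigl(c-\tfrac12\bigr)^{p},\qquad c\ge1 .
\]
This is where I expect the entire difficulty to concentrate. The two ends are manageable: at $c=1$ --- the extremal profile $f(x)=p(x-a)$ --- it reduces to $2^{\,p+1}p\ge p+3$, true for $p>1$ with equality at $p=1$, while as $c\to\infty$ the left side grows like $p^2c^{p+2}$ against $(p+1)c^{p+1}$ on the right. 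The main obstacle is a rigorous interpolation between these regimes uniformly in $p>1$; I would attempt this by regarding the difference of the two sides as a function of $c$ and showing it is nonnegative through its derivative, or by exploiting the convexity of $c\mapsto c^{p+3}$ to bound $c^{p+3}-(c-1)^{p+3}$ from below. Once this inequality is in hand, $G'\ge0$ and hence $G(b)\ge0$ follow, establishing~\eqref{Yin-Qi-ineq-akk}.
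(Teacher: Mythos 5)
Your starting diagnosis is exactly right, and it matches the paper's own limitation: the paper never proves Theorem~\ref{Akkouchi-thm} in full generality --- it quotes it from Akkouchi and recovers it only for $p\ge2$, as the case $\mathbb{T}=\mathbb{R}$ of its Theorem~\ref{Yin-Qi-thm-2}, whose proof is precisely the ``naive'' route you reject: a H\"older-type reduction plus the observation that $\int_a^b f(x)\td x\ge(b-a)^2$, which $f'\ge p$ guarantees only when $p\ge2$. So your moving-endpoint function $G(t)=(t-a)^{p-1}I(t)-[J(t)]^{p+1}$ is a genuinely different and more ambitious route. But the execution has a fatal sign error. For $\tau\le t$, the hypothesis $f'\ge p$ gives $f(t)-f(\tau)\ge p(t-\tau)$, i.e.\ $f(\tau)\le f(t)-p(t-\tau)$: the \emph{opposite} of your ``mean-value bound''. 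Integrating the correctly oriented inequality therefore produces an \emph{upper} bound, $I(t)\le\frac{1}{p(p+3)}\bigl\{[f(t)]^{p+3}-[f(t)-p(t-a)]^{p+3}\bigr\}$, and your claimed lower bound on $I(t)$ is simply false: for $f(x)=M(x-a)$ with $M$ large relative to $p$, your right-hand side is roughly $(p+3)I(t)$. (Note the irony: your $J$-estimate is derived from the very inequality, correctly oriented, that refutes your $I$-estimate.) On top of this, the terminal inequality in $c$ and $p$, where you say the entire difficulty concentrates, is only checked at $c=1$ and $c\to\infty$; so even granting the estimates, the proposal is not a proof.

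The architecture can nevertheless be saved, and more cheaply than you expect: discard $I(t)$ altogether (it is nonnegative, since $f\ge0$) and keep only your estimate $J(t)\le(t-a)f(t)-\frac p2(t-a)^2$, which is best obtained from the pointwise bound $f(\tau)\le f(t)-p(t-\tau)$ rather than by integration by parts (pointwise differentiability with $f'\ge p$ does not by itself make $f'$ integrable). With your normalization $f(t)=pc(t-a)$, $c\ge1$, this yields
\begin{equation*}
G'(t)\ge p^{p+1}(t-a)^{2p+1}\Bigl[pc^{p+2}-(p+1)c\bigl(c-\tfrac12\bigr)^{p}\Bigr],
\end{equation*}
and the bracket is nonnegative for all $c\ge1$ and $p>1$: the function $\bigl(c-\tfrac12\bigr)^{p}/c^{p+1}$ attains its maximum on $[1,\infty)$ at $c=(p+1)/2$, where its value is $2p^p/(p+1)^{p+1}$, and the required inequality $2p^p/(p+1)^{p+1}\le p/(p+1)$ is equivalent to $(p+1)^p\ge2p^{p-1}$, which holds for all $p\ge1$ with equality at $p=1$ (the function $p\log(p+1)-(p-1)\log p-\log2$ vanishes at $p=1$ and has derivative $\log(1+1/p)+1/p-1/(p+1)>0$). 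So your $G$, your $J$-estimate, and your scaling reduction do prove the theorem for every $p>1$ --- but only after the false $I$-estimate is deleted and the closing scalar inequality is actually established.
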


In~\cite[Proposition~3.5]{Brahim-Bett-Sell-08}, the following $q$-analogue of the above Theorem~\ref{Akkouchi-thm} was obtained.

\begin{theorem}\label{Brahim-Bett-Sell-08-thm}
Let $p>1$ be a real number and $f(x)$ be a function defined on $[a,b]_q$ such that $f(a)\ge0$ and $\tdq{f(x)}\ge{p}$ for all $x\in(a,b]_q$. Then
\begin{equation}\label{Yin-Qi-ineq-bra}
\int_a^b{[f(x)]^{p+2}\tdq x\ge}\frac{1}{{(b-a)^{p-1}}}\biggl[\int_a^b{f(qx)\tdq x}\biggr]^{p+1}.
\end{equation}
\end{theorem}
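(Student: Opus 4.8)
The plan is to imitate the proof of Theorem~\ref{Akkouchi-thm} inside the calculus of time scales, replacing $d/dx$ by $\tdq$ and the fundamental theorem by its $q$-analogue, and to compensate for the absence of a chain rule by convexity (supporting-line) estimates. First I would record the pointwise consequences of the hypotheses. By the $q$-fundamental theorem, $f(x)-f(a)=\int_a^x\tdq f(t)\,\tdq t\ge\int_a^x p\,\tdq t=p(x-a)$, so that $f(x)\ge p(x-a)\ge0$ on $[a,b]_q$; moreover $\tdq f\ge p>0$ forces $f(x)\ge f(qx)$, i.e.\ $f$ is nondecreasing along the $q$-lattice. Writing $g(x)=\int_a^x f(t)\,\tdq t$ and $G(x)=\int_a^x f(qt)\,\tdq t$ (so that $\tdq g=f$, $\tdq G(x)=f(qx)$, and $g(a)=G(a)=0$), monotonicity of $f$ yields the two-sided control
\[
p\,(x-a)\,\frac{x-qa}{1+q}\;\le\;g(x)\;\le\;(x-a)\,f(x),
\]
in which the left estimate is the $q$-analogue of $\tfrac{p}{2}(x-a)^2$ and follows from $\int_a^x(t-a)\,\tdq t=(x-a)\frac{x-qa}{1+q}$.

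Next I would set up the monotone auxiliary function that drives the classical argument. Put
\[
\Phi(x)=(x-a)^{p-1}\int_a^x[f(t)]^{p+2}\,\tdq t-[G(x)]^{p+1},
\]
so that $\Phi(a)=0$ and the desired inequality~\eqref{Yin-Qi-ineq-bra} is exactly the assertion $\Phi(b)\ge0$. The aim is then to prove that $\Phi$ is nondecreasing on $(a,b]_q$, i.e.\ $\tdq\Phi\ge0$, after which $\Phi(b)\ge\Phi(a)=0$ follows from positivity of the $q$-integral. In the classical limit this reduces, after an application of the weighted arithmetic--geometric mean inequality together with the quadratic lower bound on $g$, to an elementary inequality such as $2^{\,p+1}p\ge p+3$, which holds for $p>1$ (with equality as $p\to1$); a $q$-deformed form of the same elementary inequality is what must ultimately be verified.

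The main obstacle is the $\tdq$-differentiation of $\Phi$: neither the power $[G(x)]^{p+1}$ nor the product $(x-a)^{p-1}\int_a^x f^{p+2}$ has a closed-form $q$-derivative, since $\tdq$ obeys no chain or quotient rule. Here I would use convexity. Because $G$ is nondecreasing and $t\mapsto t^{p+1}$ is convex, the supporting-line inequality at $G(qx)$ and at $G(x)$ brackets
\[
(p+1)[G(qx)]^p f(qx)\;\le\;\tdq[G^{p+1}](x)\;\le\;(p+1)[G(x)]^p f(qx),
\]
and it is precisely this estimate that injects the shifted value $f(qx)$ into the computation and hence into~\eqref{Yin-Qi-ineq-bra}. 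The weight $(x-a)^{p-1}$ would be handled by the $q$-Leibniz rule $\tdq(uv)(x)=u(x)\tdq v(x)+v(qx)\tdq u(x)$ together with a one-sided tangent estimate for $\tdq[(x-a)^{p-1}]$, with the caveat that $t\mapsto t^{p-1}$ is convex for $p\ge2$ but concave for $1<p<2$, so the sign of that tangent bound, and with it the bookkeeping, splits into two cases. The crux, which I expect to be the genuinely delicate step, is to combine these one-sided bounds with the two-sided control on $g$ from the first paragraph so that the inequality $\tdq\Phi(x)\ge0$ collapses, via weighted arithmetic--geometric mean, to a single elementary inequality that both holds for every $0<q<1$ and degenerates to the classical one as $q\to1$. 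Controlling the various $q$-shifted factors $g(qx)$, $G(qx)$ and $f(qx)$ uniformly in $q$, in place of the clean cancellations available in the classical chain-rule computation, is where the real work lies.
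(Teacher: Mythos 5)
Your proposal is a program, not a proof: the decisive assertion $\tdq\Phi(x)\ge0$ is never established, and it is exactly the step at which the argument can fail. To bound $\tdq\Phi$ from below you need an \emph{upper} bound on $\tdq\bigl[G^{p+1}\bigr]$ --- your convexity bracket does supply $(p+1)[G(x)]^pf(qx)$ --- together with a \emph{lower} bound on the $q$-derivative of the product $(x-a)^{p-1}\int_a^x[f(t)]^{p+2}\tdq t$. For the latter you invoke the $q$-Leibniz rule plus a tangent estimate for $\tdq\bigl[(x-a)^{p-1}\bigr]$, but, as you yourself observe, $t\mapsto t^{p-1}$ is concave for $1<p<2$, so there the tangent estimate points the wrong way; you resolve neither case, and the terminal ``elementary inequality'' ($2^{p+1}p\ge p+3$ or its $q$-deformation) is asserted, not derived. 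Deferring precisely the ``genuinely delicate step'' means nothing has been proved. There is a further hidden gap in the scaffolding itself: the claim that $\tdq\Phi\ge0$ on $(a,b]_q$ implies $\Phi(b)\ge\Phi(a)$ ``by positivity of the $q$-integral'' is not automatic for Jackson integrals, since $\int_a^b h(t)\tdq t$ with $h\ge0$ need not be nonnegative when $a>0$ unless $a$ lies on the $q$-lattice generated by $b$; the same caveat infects your upper bound $g(x)\le(x-a)f(x)$.

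For contrast, the paper never performs any such $q$-differentiation: it quotes \eqref{Yin-Qi-ineq-bra} from the literature and then recovers it (only for $p\ge2$) as the case $\mathbb{T}=q\mathbb{Z}$ of Theorem~\ref{Yin-Qi-thm-2}, whose proof sidesteps every obstacle you wrestle with. The H\"older-type Lemma~\ref{holder-ineq-deduction-lem} (with $g\equiv1$) gives
\begin{equation*}
\int_a^b[f(x)]^{p+2}\Delta x\ge\frac{\bigl[\int_a^b f(x)\Delta x\bigr]^{p+2}}{(b-a)^{p+1}}
=\frac{\bigl[\int_a^b f(x)\Delta x\bigr]^{p+1}}{(b-a)^{p-1}}\cdot\frac{\int_a^b f(x)\Delta x}{(b-a)^2},
\end{equation*}
so everything collapses to the single estimate $\int_a^b f(x)\Delta x\ge(b-a)^2$, which is proved with the auxiliary function $F(x)=\int_a^x f(t)\Delta t-(x-a)^2$ whose delta derivatives are computed \emph{exactly} (no powers of integrals ever get differentiated): $F^{\Delta\Delta}=f^\Delta-1-\sigma^\Delta\ge0$ and $F^\Delta(a)=f(a)-\mu(a)\ge0$, whence $F(b)\ge0$ by Lemma~\ref{decrease-lem}. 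Since, by your own first observation, $\int_a^b f(qx)\tdq x\le\int_a^b f(x)\tdq x$, the bound with $\int_a^b f(x)$ on the right-hand side is the stronger one, so this route does yield \eqref{Yin-Qi-ineq-bra} --- but only for $p\ge2$, which is why the paper claims unification of Theorems~\ref{Akkouchi-thm} and~\ref{Brahim-Bett-Sell-08-thm} only in that range; for $1<p<2$ a finer argument of the type you attempt is genuinely required, and your sketch does not supply it. If you want a complete argument in the range $p\ge2$, abandon the $\Phi$-machinery and run the H\"older reduction above directly in $q$-notation: no chain rule, no product rule on powers, and no case split are needed.
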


The main aim of this paper is to generalize the above results on time scales. As by-product, an unified form of Theorems~\ref{Akkouchi-thm} and~\ref{Brahim-Bett-Sell-08-thm} is demonstrated when $p\ge2$.

\section{Notations and lemmas}

\subsection{Notations}

Throughout this paper, we adopt notations in the monograph~\cite{Bohner-Peterson-01-B}.
\par
A time scale $\mathbb{T}$ is a non-empty closed subset of the real numbers $\mathbb{R}$. The forward and backward jump operators $\sigma, \rho :\mathbb{T}\to\mathbb{T}$ are respectively defined by
\begin{equation}
\sigma(t)=\inf\{{s\in\mathbb{T}:s>t}\}
\end{equation}
and
\begin{equation}
\rho(t)=\sup\{{s\in\mathbb{T}:s<t}\},
\end{equation}
where the supremum of the empty set is defined to be the infimum of $\mathbb{T}$.
\par
A point $t\in\mathbb{T}$ is said to be right\nobreakdash-scattered if $\sigma(t)>t$ and to be right\nobreakdash-dense if $\sigma(t)= t$; on the other hand, a point $t\in\mathbb{T}$ with $ t>\inf\mathbb{T}$ is said to be left\nobreakdash-scattered if $\rho(t)<t$ and to be left\nobreakdash-dense if $\rho(t)=t$.
A function $g:\mathbb{T}\to\mathbb{R}$ is said to be rd\nobreakdash-continuous provided that $g$ is continuous at right\nobreakdash-dense points and has finite left\nobreakdash-sided limits at left\nobreakdash-dense points in $\mathbb{T}$. In what follows, the set of all rd\nobreakdash-continuous functions from $\mathbb{T}$ to $\mathbb{R}$ is denoted by $C_{rd}(\mathbb{T}, \mathbb{R})$. The graininess function $\mu$ for a time scales $\mathbb{T}$ is defined by $\mu(t)=\sigma(t)-t$. For $f:\mathbb{T}\to\mathbb{R}$, the notation $f^\sigma$ means the composition $f\circ\sigma$.
\par
For a function $f:\mathbb{T}\to\mathbb{R}$, the (delta) derivative $f^\Delta(t)$ at $t\in\mathbb{T}$ is defined to be the number, if it exists, such that for all $\varepsilon >0$, there is a neighborhood $U$ of $t$ with
\begin{equation}
\bigl|f(\sigma(t))-f(s)-f^\Delta(t)(\sigma(t)-s)\bigr|<\varepsilon |\sigma(t)-s|
\end{equation}
for all $s\in U$. If the (delta) derivative $ f^\Delta(t)$ exits for all $t\in\mathbb{T}$, then we say that $f$ is (delta) differentiable on $\mathbb{T}$.
\par
The (delta) derivatives of the product $fg$ and the quotient $\frac{f}{g}$ of two (delta) differentiable functions $f$ and $g$ may be formulated respectively as
\begin{equation}
(fg)^\Delta=f^\Delta  g+f^\sigma  g^\Delta=fg^\Delta+f^\Delta  g^\sigma
\end{equation}
and
\begin{equation}
\biggl({\frac{f}{g}}\biggr)^\Delta=\frac{{f^\Delta  g-fg^\Delta }}{{gg^\sigma }},
\end{equation}
where $gg^\sigma\ne0$.
\par
For $ b, c\in\mathbb{T}$ and a (delta) differentiable function $f$, Cauchy integral of $f^\Delta$ is defined by
\begin{equation}
\int_b^c{f^\Delta(t)\Delta t=f(c)-f(b)}
\end{equation}
and infinite integrals are defined as
\begin{equation}
\int_b^\infty {f(t)\Delta t=\mathop{\lim}_{c\to\infty}}\int_b^c{f(t)}\Delta{t}.
\end{equation}
An integration by parts formula reads that
\begin{equation}
\int_b^c{f(t)g^\Delta(t)\Delta t=f(t)g(t)\big|_b^c-\int_b^c{f^\Delta(t)g(\sigma(t))\Delta t}}.
\end{equation}
Note that in the case $\mathbb{T}=\mathbb{R}$, we have $\sigma(t)=\rho(t)=t$, $\mu(t)=0$, $f^\Delta(t)= f'(t)$, and
\begin{equation}
\int_b^c{f^\Delta(t)\Delta t=\int_b^c{f'(t)\td t}},
\end{equation}
and that in the case $\mathbb{T}=q\mathbb{Z}$, we have $\sigma(t)=t+q$, $\rho(t)=t-q$, $\mu(t)\equiv q$, and
\begin{equation}
f^\Delta(t)=\frac{ f(t+q)-f(t)}{q}.
\end{equation}

\subsection{Lemmas}

The following lemmas are useful and some of them can be found in the book~\cite{Bohner-Peterson-01-B}.

\begin{lemma}[{\cite[p.~28, Theorem~1.76]{Bohner-Peterson-01-B}}]\label{decrease-lem}
If $f^{\Delta}(x)\ge0$, then $f(x)$ is non-decreasing.
\end{lemma}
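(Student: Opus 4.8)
The plan is to reduce the statement to the one-sided assertion that $f(b)\ge f(a)$ whenever $a<b$ in $\mathbb{T}$, since applying this to every such pair is exactly what ``$f$ is non-decreasing'' means. The difficulty in working on a general time scale is that the hypothesis $f^\Delta\ge0$ only furnishes pointwise information, and this information comes in two different shapes: at a right-scattered point $t$ one has, directly from the definition of the delta derivative (take $s=t$), the exact identity $f(\sigma(t))-f(t)=\mu(t)f^\Delta(t)$, whereas at a right-dense point one controls only difference quotients in the limit. To assemble these local pieces into a global monotonicity statement I would invoke the induction principle on time scales from the monograph~\cite{Bohner-Peterson-01-B}.

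First I would introduce an auxiliary perturbation that upgrades the hypothesis $f^\Delta\ge0$ to a strict one. Fix $a<b$ and $\varepsilon>0$, and set
\[
h(t)=f(t)-f(a)+\varepsilon(t-a),
\]
so that $h(a)=0$, the function $h$ is delta differentiable (hence continuous), and $h^\Delta(t)=f^\Delta(t)+\varepsilon\ge\varepsilon>0$ for every $t$. The goal then becomes to prove $h(t)\ge0$ throughout $[a,b]$; once this is in hand, evaluating at $t=b$ gives $f(b)-f(a)+\varepsilon(b-a)\ge0$, and letting $\varepsilon\to0^+$ yields $f(b)\ge f(a)$.

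The core of the argument is to show that the statement $S(t)\colon h(t)\ge0$ propagates along the time scale. The base case $S(a)$ holds since $h(a)=0$. At a right-scattered point $t$ where $S(t)$ holds, the identity $h(\sigma(t))=h(t)+\mu(t)h^\Delta(t)\ge h(t)\ge0$, valid because $\mu(t)>0$ and $h^\Delta(t)>0$, gives $S(\sigma(t))$. At a right-dense point $t$ where $S(t)$ holds, the definition of the delta derivative with $\sigma(t)=t$ provides, for any $\delta\in(0,h^\Delta(t))$, a neighborhood on which $h(s)\ge h(t)+(h^\Delta(t)-\delta)(s-t)\ge h(t)\ge0$ for $s>t$; this is the step I expect to be the main obstacle, since it is precisely where the strictness $h^\Delta(t)>0$ is indispensable. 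With only $h^\Delta(t)\ge0$ the error term $-\delta(s-t)$ could spoil the sign, which is exactly why the $\varepsilon$-perturbation was built in. Finally, at a left-dense point $t$ at which $S(s)$ holds for all $s<t$, the continuity of $h$ gives $h(t)=\lim_{s\to t^-}h(s)\ge0$, so $S(t)$ holds. By the induction principle these four verifications force $S(t)$ to hold on all of $[a,b]$, which completes the plan.
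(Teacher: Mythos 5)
Your argument is correct, but there is nothing in the paper to compare it against: the paper states Lemma \ref{decrease-lem} without proof, quoting it verbatim from \cite[p.~28, Theorem~1.76]{Bohner-Peterson-01-B}. Judged on its own merits, your proof is sound and complete. The reduction to showing $f(b)\ge f(a)$ for one fixed pair $a<b$, the perturbation $h(t)=f(t)-f(a)+\varepsilon(t-a)$ (which indeed satisfies $h^\Delta=f^\Delta+\varepsilon\ge\varepsilon>0$, since the identity map has delta derivative $1$, and is continuous because delta differentiability implies continuity), and the four-case verification of the induction principle are all carried out correctly: the exact increment identity $h(\sigma(t))-h(t)=\mu(t)h^\Delta(t)$ at right-scattered points, the estimate $h(s)\ge h(t)+\bigl(h^\Delta(t)-\delta\bigr)(s-t)$ with $0<\delta<h^\Delta(t)$ for nearby $s>t$ at right-dense points, and the limit argument at left-dense points. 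You also correctly isolated the one place where strict positivity of the derivative is indispensable, namely the right-dense case, which is exactly what the $\varepsilon$-perturbation buys, and the passage $\varepsilon\to0^+$ at the end recovers the non-strict conclusion. The contrast with the cited source is one of machinery: the monograph obtains monotonicity from its integral calculus (the representation $f(t)-f(s)=\int_s^t f^\Delta(\tau)\Delta\tau$ together with sign-preservation of the delta integral, resting ultimately on a mean value theorem), whereas your route uses only the definition of the delta derivative and the induction principle on time scales, so it is more elementary and self-contained at the cost of length; it also quietly assumes $f$ is delta differentiable (hence continuous) at every point of $[a,b]$ where the induction needs it, which is the natural reading of the lemma's hypothesis and of its use in the proof of Theorem \ref{Yin-Qi-thm-2}.
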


\begin{lemma}[{\cite[p.~32, Theorem~1.90]{Bohner-Peterson-01-B}, Chain Rule}]\label{Chain-Rule-lem}
Let $f:\mathbb{R}\to\mathbb{R}$ be continuously differentiable and let $g:\mathbb{T}\to\mathbb{R}$ be delta differentiable. Then $f\circ{g}:\mathbb{T}\to\mathbb{R}$ is delta differentiable and
\begin{equation}\label{Yin-Qi-ineq-2.11}
(f\circ g)^\Delta(t)=\biggl\{\int_0^1{\bigl[f'(g(t)+h\mu
(t)g^\Delta }(t)\bigr]\td h\biggr\}g^\Delta(t).
\end{equation}
\end{lemma}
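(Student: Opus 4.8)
The plan is to verify the asserted formula separately at right\nobreakdash-scattered and at right\nobreakdash-dense points of $\mathbb{T}$, since the delta derivative is computed by genuinely different mechanisms in the two cases. At a right\nobreakdash-scattered point the derivative is an explicit difference quotient and the identity reduces to the fundamental theorem of calculus, whereas at a right\nobreakdash-dense point it is a genuine limit and the claimed formula collapses to the ordinary chain rule $f'(g(t))g^\Delta(t)$, which has to be extracted from the $\varepsilon$-$\delta$ definition.

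First I would treat a right\nobreakdash-scattered point $t$, where $\mu(t)>0$ and the delta derivative of any function is the difference quotient $g^\Delta(t)=\frac{g(\sigma(t))-g(t)}{\mu(t)}$; equivalently $g(\sigma(t))=g(t)+\mu(t)g^\Delta(t)$. Applying this quotient to $f\circ g$ and expressing the increment of $f$ along the segment from $g(t)$ to $g(\sigma(t))$ by the fundamental theorem of calculus gives
\begin{equation*}
(f\circ g)^\Delta(t)=\frac{f\bigl(g(t)+\mu(t)g^\Delta(t)\bigr)-f(g(t))}{\mu(t)}=\Bigl[\int_0^1 f'\bigl(g(t)+h\mu(t)g^\Delta(t)\bigr)\,\td h\Bigr]g^\Delta(t),
\end{equation*}
which is exactly~\eqref{Yin-Qi-ineq-2.11}. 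This case is a direct computation and poses no difficulty.

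The substantive work is the right\nobreakdash-dense case $\sigma(t)=t$, $\mu(t)=0$, where the inner integral is $h$\nobreakdash-independent and the claim reads $(f\circ g)^\Delta(t)=f'(g(t))g^\Delta(t)$. Here I would argue from the $\varepsilon$-$\delta$ definition. Fix $\varepsilon>0$. Delta differentiability of $g$ supplies a neighborhood on which $g(t)-g(s)=g^\Delta(t)(t-s)+r(s)$ with $|r(s)|\le\varepsilon^*|t-s|$, and in particular forces $g$ to be continuous at $t$; continuity of $f'$ at $g(t)$ then gives, after shrinking the neighborhood, that $f'$ varies by at most $\varepsilon'$ over the segment joining $g(s)$ to $g(t)$. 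Writing the increment of $f$ again through its integral representation and inserting and subtracting $f'(g(t))g^\Delta(t)(t-s)$ yields
\begin{equation*}
\bigl|f(g(t))-f(g(s))-f'(g(t))g^\Delta(t)(t-s)\bigr|\le\bigl[\varepsilon'(|g^\Delta(t)|+\varepsilon^*)+|f'(g(t))|\,\varepsilon^*\bigr]|t-s|,
\end{equation*}
and choosing $\varepsilon^*$ and $\varepsilon'$ small makes the bracket at most $\varepsilon$.

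The main obstacle is precisely this right\nobreakdash-dense estimate: one must simultaneously control the deviation of $g$ from its linearization (through delta differentiability) and the deviation of $f'$ from its value at $g(t)$ (through continuous differentiability of $f$ together with continuity of $g$), and then package both into a single bound of the form $\varepsilon|t-s|$. Everything else---the algebraic verification at right\nobreakdash-scattered points and the reduction of the integral formula when $\mu(t)=0$---is routine.
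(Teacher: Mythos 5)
The paper does not prove this lemma at all: it is quoted verbatim from Bohner and Peterson's monograph (Theorem~1.90 there), so there is no in-paper argument to compare yours against; the relevant benchmark is the book's proof. Your proof is correct, and it takes a genuinely different route from that proof. The book gives a single unified $\varepsilon$-$\delta$ estimate: starting from the substitution-rule identity $f(g(\sigma(t)))-f(g(s))=[g(\sigma(t))-g(s)]\int_0^1 f'\bigl(hg(\sigma(t))+(1-h)g(s)\bigr)\td h$, it compares this directly with $(\sigma(t)-s)\,g^\Delta(t)\int_0^1 f'\bigl(g(t)+h\mu(t)g^\Delta(t)\bigr)\td h$ using delta differentiability of $g$ and continuity of $f'$ near $g(t)$, with no case distinction. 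You instead split into right-scattered and right-dense points, which buys transparency: the scattered case is pure algebra plus the fundamental theorem of calculus, and the dense case is exactly the classical chain-rule estimate, whose bracket $\bigl[\varepsilon'(|g^\Delta(t)|+\varepsilon^*)+|f'(g(t))|\varepsilon^*\bigr]$ you have set up correctly. The one point you should tighten is in the scattered case: the claim that ``the delta derivative of any function is the difference quotient'' at a right-scattered $t$ holds only for functions \emph{continuous} at $t$, and since delta differentiability of $f\circ g$ is part of what is being proven, you must first observe that $f\circ g$ is continuous at $t$ (because $g$, being delta differentiable at $t$, is continuous there, and $f$ is continuous) and then invoke that characterization to get both existence of $(f\circ g)^\Delta(t)$ and the quotient formula. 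With that one sentence added, your two-case argument is complete.
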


\begin{lemma}[{\cite[p.~34, Theorem~1.93]{Bohner-Peterson-01-B}}]\label{2.12-lem}
Assume that $v:\mathbb{T}\to{\mathbb{R}}$ is strictly increasing, that $\mathbb{\tilde{T}}=v(\mathbb{T})$ is a time scale, and that $\omega:\mathbb{\tilde{T}}\to{\mathbb{R}}$. If $v^\Delta(t)$ and $\omega^{\tilde{\Delta}}(v(t))$ exist for $t\in\mathbb{T}$, then
\begin{equation}\label{Yin-Qi-ineq-2.12}
(\omega \circ v)^\Delta =\Bigl(\omega^{\tilde{\Delta}}\circ v\Bigr)v^{\Delta}.
\end{equation}
\end{lemma}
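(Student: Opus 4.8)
The plan is to work directly from the $\varepsilon$\nobreakdash-$U$ definition of the delta derivative recorded in Section~2, since the asserted formula is precisely the statement that $\omega\circ v$ is delta differentiable at $t$ with derivative $\omega^{\tilde{\Delta}}(v(t))\,v^{\Delta}(t)$. The first step I would carry out is a structural observation that does all the bookkeeping: because $v$ is delta differentiable it is continuous, and being strictly increasing with $\mathbb{\tilde T}=v(\mathbb{T})$ it intertwines the two forward jump operators, namely
\begin{equation}
v(\sigma(t))=\tilde\sigma(v(t)),\qquad t\in\mathbb{T}.
\end{equation}
To see this I would compute $\tilde\sigma(v(t))=\inf\{v(s):s\in\mathbb{T},\,v(s)>v(t)\}=\inf\{v(s):s>t\}$, using strict monotonicity for the second equality; in the right\nobreakdash-scattered case the infimum is attained at $s=\sigma(t)$, while in the right\nobreakdash-dense case $\sigma(t)=t$ and continuity of $v$ forces $\inf_{s\downarrow t}v(s)=v(t)$. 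This identity is what lets me replace $(\omega\circ v)(\sigma(t))$ by $\omega(\tilde\sigma(v(t)))$ and thereby feed the problem into the derivative estimate for $\omega$ on $\mathbb{\tilde T}$.

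Next I would fix $\varepsilon>0$ and introduce two auxiliary tolerances $\varepsilon_1,\varepsilon_2>0$ to be pinned down at the end. Applying the definition of $\omega^{\tilde{\Delta}}(v(t))$ gives a neighborhood $\tilde U$ of $v(t)$ in $\mathbb{\tilde T}$ on which the error of $\omega$ is controlled by $\varepsilon_1$, and applying the definition of $v^{\Delta}(t)$ gives a neighborhood $U_1$ of $t$ on which the error of $v$ is controlled by $\varepsilon_2$. By continuity of $v$ I can choose a further neighborhood $U_2$ of $t$ with $v(U_2\cap\mathbb{T})\subseteq\tilde U$, and then set $U=U_1\cap U_2$, so that taking the increment $r=v(s)$ with $s\in U$ is legitimate. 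The single neighborhood $U$ is what the definition demands, and this intersection\slash continuity juggling is the first place where care is required.

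The heart of the argument is the telescoping decomposition, valid for $s\in U$ and using the jump identity above:
\begin{align}
(\omega\circ v)(\sigma(t))&-(\omega\circ v)(s)-\omega^{\tilde{\Delta}}(v(t))\,v^{\Delta}(t)\,(\sigma(t)-s)\notag\\
&=\bigl[\omega(\tilde\sigma(v(t)))-\omega(v(s))-\omega^{\tilde{\Delta}}(v(t))\bigl(\tilde\sigma(v(t))-v(s)\bigr)\bigr]\notag\\
&\quad+\omega^{\tilde{\Delta}}(v(t))\bigl[\bigl(v(\sigma(t))-v(s)\bigr)-v^{\Delta}(t)(\sigma(t)-s)\bigr].
\end{align}
The second bracket is exactly the error term for $v^{\Delta}$ and is bounded by $\varepsilon_2\,|\sigma(t)-s|$. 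The first bracket is the error term for $\omega^{\tilde{\Delta}}$ evaluated at the admissible increment $v(s)\in\tilde U$, hence bounded by $\varepsilon_1\,|\tilde\sigma(v(t))-v(s)|=\varepsilon_1\,|v(\sigma(t))-v(s)|$. To make this useful I would reuse the $v^{\Delta}$ estimate once more, in the crude form
\begin{equation}
|v(\sigma(t))-v(s)|\le\bigl(|v^{\Delta}(t)|+\varepsilon_2\bigr)\,|\sigma(t)-s|,
\end{equation}
which converts the first bracket into a multiple of $|\sigma(t)-s|$ as well. Collecting the pieces yields a bound of the form $\bigl[\varepsilon_1(|v^{\Delta}(t)|+\varepsilon_2)+|\omega^{\tilde{\Delta}}(v(t))|\,\varepsilon_2\bigr]\,|\sigma(t)-s|$, and choosing $\varepsilon_1,\varepsilon_2$ small makes the coefficient smaller than $\varepsilon$.

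I expect the main obstacle to be precisely this conversion of the $\omega$\nobreakdash-error factor $|\tilde\sigma(v(t))-v(s)|$ into a controlled multiple of $|\sigma(t)-s|$, together with verifying the jump\nobreakdash-operator identity in the right\nobreakdash-dense case where continuity of $v$ is essential; the remaining estimates are a routine two\nobreakdash-term triangle inequality. A final remark I would add is that no nondegeneracy of $v^{\Delta}(t)$ is needed, since the bound is uniform in the sign and vanishing of $v^{\Delta}(t)$, so the formula holds at every $t\in\mathbb{T}$ where the two derivatives exist.
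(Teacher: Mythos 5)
The paper states this lemma without proof, citing it verbatim from Bohner--Peterson (Theorem~1.93), and your argument is correct and is essentially the proof given in that reference: the intertwining identity $v(\sigma(t))=\tilde\sigma(v(t))$, the two-neighborhood setup with auxiliary tolerances, and the telescoping triangle-inequality estimate that reuses the $v^\Delta$ bound to convert $|v(\sigma(t))-v(s)|$ into a multiple of $|\sigma(t)-s|$. Nothing is missing; your handling of the right-dense case via continuity, of the case $t=\max\mathbb{T}$, and your closing remark that no nondegeneracy of $v^\Delta(t)$ is required are all accurate.
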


\begin{lemma}[{\cite[p.~259, Theorem~6.13]{Bohner-Peterson-01-B}, H\"older inequality}]\label{holder-ineq-lem}
Let $a,b\in\mathbb{T}$. If $f,g\in C_{rd}(\mathbb{T}, \mathbb{R})$, then
\begin{equation}\label{Yin-Qi-ineq-2.13}
\int_a^b{|f(x)g(x)|\Delta x\le}\biggl[\int_a^b |f(x)|^p\Delta x\biggr]^{1/p}
\biggl[\int_a^b |g(x)|^q\Delta x\biggr]^{1/q},
\end{equation}
where $p>1$ and $\frac{1}{p}+\frac{1}{q}=1$.
\end{lemma}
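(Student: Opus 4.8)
The plan is to deduce the time-scale H\"older inequality from the pointwise Young inequality together with the linearity and monotonicity of the Cauchy (delta) integral. Set
\[
A=\biggl[\int_a^b|f(x)|^p\Delta x\biggr]^{1/p},\qquad B=\biggl[\int_a^b|g(x)|^q\Delta x\biggr]^{1/q}.
\]
First I would dispose of the degenerate cases. If $A=0$, then the nonnegative rd\nobreakdash-continuous integrand $|f|^p$ has vanishing delta integral; since at each right\nobreakdash-scattered point $t$ the integral contributes the term $\mu(t)|f(t)|^p\ge0$ and on the right\nobreakdash-dense portions the integrand is continuous, this forces $f$ to vanish at every point that contributes to the integral. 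Hence $|f(x)g(x)|$ also has vanishing integral and both sides of~\eqref{Yin-Qi-ineq-2.13} equal zero. The same reasoning covers $B=0$, and if either integral is infinite the inequality is trivial, so it remains to treat $0<A,B<\infty$.

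The algebraic heart of the argument is Young's inequality: for all $\alpha,\beta\ge0$ and conjugate exponents $p,q>1$ with $\frac1p+\frac1q=1$,
\[
\alpha\beta\le\frac{\alpha^p}{p}+\frac{\beta^q}{q}.
\]
I would prove this in the standard elementary way, for instance from the concavity of the logarithm: taking $\lambda=\frac1p$, $1-\lambda=\frac1q$, $u=\alpha^p$, $v=\beta^q$ gives $\log\bigl(\frac1p\alpha^p+\frac1q\beta^q\bigr)\ge\frac1p\log\alpha^p+\frac1q\log\beta^q=\log(\alpha\beta)$, and exponentiating yields the claim. This step is purely pointwise and does not interact with the time-scale structure.

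Next I would substitute $\alpha=|f(x)|/A$ and $\beta=|g(x)|/B$, obtaining for every $x\in[a,b]$
\[
\frac{|f(x)g(x)|}{AB}\le\frac{1}{p}\frac{|f(x)|^p}{A^p}+\frac{1}{q}\frac{|g(x)|^q}{B^q}.
\]
Since $f,g\in C_{rd}(\mathbb{T},\mathbb{R})$, the functions $|fg|$, $|f|^p$, and $|g|^q$ are rd\nobreakdash-continuous, hence delta integrable on $[a,b]$. Integrating the displayed inequality and using linearity of the Cauchy integral together with its monotonicity (the crucial time-scale ingredient, namely that integrating a pointwise inequality between rd\nobreakdash-continuous functions preserves it) gives
\[
\frac{1}{AB}\int_a^b|f(x)g(x)|\Delta x\le\frac{1}{p}\frac{1}{A^p}\int_a^b|f(x)|^p\Delta x+\frac{1}{q}\frac{1}{B^q}\int_a^b|g(x)|^q\Delta x=\frac1p+\frac1q=1,
\]
where the last equality uses $A^p=\int_a^b|f|^p\Delta x$ and $B^q=\int_a^b|g|^q\Delta x$. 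Multiplying by $AB$ produces~\eqref{Yin-Qi-ineq-2.13}.

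The step I expect to require the most care is the handling of the degenerate case together with the precise justification that the delta integral is monotone. On a general time scale the integral superposes a genuinely continuous contribution with weighted point masses $\mu(t)|f(t)|^p$ at right\nobreakdash-scattered points, so one must check that the pointwise inequality is integrated correctly across both kinds of points and that a vanishing integral of a nonnegative rd\nobreakdash-continuous function really does trivialize the left-hand side. Everything else, namely Young's inequality and the normalization trick, transfers verbatim from the classical real-variable proof.
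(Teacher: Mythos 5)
Your proof is correct: it is the classical Young's-inequality argument (normalize by $A$ and $B$, apply $\alpha\beta\le\frac{\alpha^p}{p}+\frac{\beta^q}{q}$ pointwise, then integrate using linearity and monotonicity of the delta integral), and your treatment of the degenerate case $A=0$ or $B=0$ is sound since a nonnegative rd-continuous function with vanishing delta integral must vanish on $[a,b)$. The paper itself gives no proof of this lemma --- it is quoted directly from Theorem~6.13 of the monograph~\cite{Bohner-Peterson-01-B} --- and the proof in that source is essentially the same normalization argument you give, so your approach matches the canonical one.
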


\begin{lemma}\label{holder-ineq-deduction-lem}
 Let $a, b\in\mathbb{T}$. If $f,g\in C_{rd}(\mathbb{T},
\mathbb{R})$ are positive, then
\begin{equation}\label{Yin-Qi-ineq-2.14}
\int_a^b{\frac{{[f(x)]^p}}
{{[g(x)]^{p/q}}}\Delta x\ge}\frac{{\bigl[\int_a^b{f(x)\Delta x}\bigr]^p}}{{\bigl[\int_a^b{g(x)\Delta x}\bigr]^{p/q}}},
\end{equation}
where $p>1$ or $p<0$ while $\frac{1}{p}+\frac{1}{q}=1$.
\par
The equality in~\eqref{Yin-Qi-ineq-2.14} holds
if and only if $f(x)=ag(x)$ for $a>0$.
\end{lemma}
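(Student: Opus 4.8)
The plan is to derive~\eqref{Yin-Qi-ineq-2.14} from the time\nobreakdash-scale H\"older inequality of Lemma~\ref{holder-ineq-lem} by a judicious factorization. Since $f$ and $g$ are positive and rd\nobreakdash-continuous, all the real powers appearing below are again positive and belong to $C_{rd}(\mathbb{T},\mathbb{R})$, so Lemma~\ref{holder-ineq-lem} is applicable. First I would treat the case $p>1$. Writing
\[
f(x)=\frac{f(x)}{[g(x)]^{1/q}}\cdot[g(x)]^{1/q}
\]
and applying~\eqref{Yin-Qi-ineq-2.13} to the two factors with conjugate exponents $p$ and $q$, and using $\bigl(f/g^{1/q}\bigr)^p=f^p/g^{p/q}$ together with $\bigl(g^{1/q}\bigr)^q=g$, gives
\[
\int_a^b f(x)\,\Delta x\le\biggl[\int_a^b\frac{[f(x)]^p}{[g(x)]^{p/q}}\,\Delta x\biggr]^{1/p}\biggl[\int_a^b g(x)\,\Delta x\biggr]^{1/q}.
\]
Both sides are positive, so raising to the power $p>1$ and dividing by $\bigl[\int_a^b g(x)\,\Delta x\bigr]^{p/q}$ yields~\eqref{Yin-Qi-ineq-2.14} for $p>1$.

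The equality statement would then be read off from the equality case of H\"older's inequality: \eqref{Yin-Qi-ineq-2.13} becomes an equality precisely when the two functions $f^p/g^{p/q}$ and $g$ are proportional, say $f^p/g^{p/q}=c\,g$ with $c>0$. Since $\frac1p+\frac1q=1$ forces $1+\frac pq=p$, this reduces to $f^p=c\,g^{p}$, that is, $f=a\,g$ with $a=c^{1/p}>0$, which is the asserted condition.

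The remaining case $p<0$ is where I expect the main difficulty, because then the exponent in the naive factorization is negative and Lemma~\ref{holder-ineq-lem} only supplies the inequality in the form needed for exponents exceeding $1$. The remedy is to rewrite the integrand on the \emph{left}-hand side instead: when $p<0$ one has $0<q<1$, hence $r:=1-p>1$, and the factorization
\[
g(x)=\biggl[\frac{[f(x)]^p}{[g(x)]^{p/q}}\biggr]^{-q/p}[f(x)]^{q}
\]
lets me apply the ordinary H\"older inequality~\eqref{Yin-Qi-ineq-2.13} with the admissible exponent $r=1-p>1$ and its conjugate $\frac1q$, under which the first factor raised to the power $r$ returns $f^p/g^{p/q}$ and the second factor raised to the power $\frac1q$ returns $f$. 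After assembling the resulting estimate $\int_a^b g\,\Delta x\le\bigl[\int_a^b f^p/g^{p/q}\,\Delta x\bigr]^{1/r}\bigl[\int_a^b f\,\Delta x\bigr]^{q}$ and then raising it to the (positive) power $-q/p=r>0$, the inequality~\eqref{Yin-Qi-ineq-2.14} drops out with the same orientation as before. The main obstacle is thus purely the exponent bookkeeping: one must verify that $r=1-p>1$, that the conjugate exponent is exactly $\frac1q$, that $q(1-p)=-p$ so the powers of $\int_a^b f\,\Delta x$ and $\int_a^b g\,\Delta x$ match, and that the final exponentiation by $r>0$ preserves the direction of the inequality. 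The equality discussion in this case is identical to that for $p>1$.
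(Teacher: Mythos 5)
Your proof is correct, and for $p>1$ it coincides exactly with the paper's: the same factorization $f=\bigl(f/g^{1/q}\bigr)\cdot g^{1/q}$, the same application of Lemma~\ref{holder-ineq-lem} with exponents $p$ and $q$, and the same raising to the $p$-th power. Where you genuinely diverge is the case $p<0$: the paper disposes of it in one sentence by invoking ``the inverse of H\"older's inequality'' (the reverse H\"older inequality, which it never states or proves on time scales), whereas you reduce this case to the \emph{ordinary} H\"older inequality by factorizing $g$ rather than $f$ and using the conjugate pair $r=1-p>1$ and $1/q$. Your route is more self-contained, since it needs only Lemma~\ref{holder-ineq-lem}; the paper's route is shorter but leans on an auxiliary inequality outside its stated toolkit. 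You also sketch a proof of the equality clause via the equality case of H\"older, which the paper asserts but never proves at all, so on that point your write-up is more complete (though note that Lemma~\ref{holder-ineq-lem} as stated in the paper does not record the equality condition, so you are importing a standard fact there).

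One bookkeeping slip to fix: you claim $-q/p=r$. In fact, from $q=p/(p-1)$ one gets $-q/p=1/(1-p)=1/r$, not $r$. The power you must raise
\begin{equation*}
\int_a^b g(x)\,\Delta x\le\biggl[\int_a^b \frac{[f(x)]^p}{[g(x)]^{p/q}}\,\Delta x\biggr]^{1/(1-p)}\biggl[\int_a^b f(x)\,\Delta x\biggr]^{q}
\end{equation*}
to is $r=1-p$ (which clears the exponent $1/(1-p)$ on the left-hand integral of~\eqref{Yin-Qi-ineq-2.14} and turns $q(1-p)$ into $-p$); since $r>0$ the direction is preserved, and using $1-p=-p/q$ the inequality~\eqref{Yin-Qi-ineq-2.14} follows as you say. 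So the argument survives, but the identification of the exponent should be corrected.
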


\begin{proof}
For $p>1$, using the inequality~\eqref{Yin-Qi-ineq-2.13} in Lemma~\ref{holder-ineq-lem} leads to
\begin{equation*}
\int_a^b f(x)\Delta x=\int_a^b\frac{{f(x)}}
{{\sqrt[q]{{g(x)}}\,}}\sqrt[q]{{g(x)}}\, \Delta x
\le \biggl(\int_a^b{\frac{{[f(x)]^p}}{{[g(x)]^{p/q}}}\Delta x}\biggr)^{\frac{1}
{p}}\biggl(\int_a^b{g(x)\Delta x}\biggr)^{1/q}.
\end{equation*}
Further taking the $p$-th power on both sides of the above inequality yields~\eqref{Yin-Qi-ineq-2.14}.
\par
For $p<0$, utilizing the inverse of H\"older's inequality and similar argument as a little time ago result in the required inequality.
\end{proof}

\begin{lemma}\label{holder-ineq-ded-lem}
Let $a, b\in\mathbb{T}$. If $f,g\in C_{rd}(\mathbb{T}, \mathbb{R})$ and
\begin{equation}
0<m\le{\frac{f(x)}{g(x)}}\le{M}<\infty,
\end{equation}
then we have
\begin{equation}\label{Yin-Qi-ineq-2.15}
\biggl[\int_a^b{f(x)\Delta x}\biggr]^{1/p}\biggl[\int_a^b{g(x)\Delta x}
\biggr]^{1/q}\le\biggl(\frac{M}{m}\biggr)^{1/pq}
\int_a^b{f^{1/p}(x)g^{1/q}(x)\Delta x},
\end{equation}
where $p>1$ and $\frac{1}{p}+\frac{1}{q}=1$.
\end{lemma}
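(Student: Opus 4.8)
The plan is to bound the integrand $f^{1/p}g^{1/q}$ pointwise from below in two complementary ways, and then to recombine the two resulting integral estimates through the conjugacy relation $\frac1p+\frac1q=1$. Throughout I assume $f,g>0$ on $[a,b]$ (forced, up to a common sign, by $0<m\le f/g\le M$), so that the fractional powers and the quotients below are well defined, and I abbreviate $A=\int_a^b f(x)\Delta x$, $B=\int_a^b g(x)\Delta x$, and $I=\int_a^b f^{1/p}(x)g^{1/q}(x)\Delta x$. These three quantities are finite and positive because $f$ and $g$ are rd\nobreakdash-continuous and positive on the compact interval $[a,b]$. In this notation the assertion~\eqref{Yin-Qi-ineq-2.15} reads simply $A^{1/p}B^{1/q}\le(M/m)^{1/pq}I$.

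First I would extract the two pointwise inequalities. Factoring out $g$ and using $\frac1q-1=-\frac1p$ gives $f^{1/p}g^{1/q}=(f/g)^{1/p}g\ge m^{1/p}g$, since $f/g\ge m$. Symmetrically, factoring out $f$ and using $\frac1p-1=-\frac1q$ gives $f^{1/p}g^{1/q}=(f/g)^{-1/q}f\ge M^{-1/q}f$, since $f/g\le M$. Both estimates hold for every $x\in[a,b]$.

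Next I would integrate each pointwise bound. Because the delta integral is monotone on rd\nobreakdash-continuous functions, the two inequalities yield $I\ge m^{1/p}B$ and $I\ge M^{-1/q}A$. The decisive step is then to split $I=I^{1/p}I^{1/q}$, which is legitimate precisely because $\frac1p+\frac1q=1$, and to feed the second bound into the factor $I^{1/p}$ and the first into $I^{1/q}$:
\begin{equation*}
I=I^{1/p}I^{1/q}\ge\bigl(M^{-1/q}A\bigr)^{1/p}\bigl(m^{1/p}B\bigr)^{1/q}
=\Bigl(\frac{m}{M}\Bigr)^{1/pq}A^{1/p}B^{1/q}.
\end{equation*}
Multiplying through by $(M/m)^{1/pq}$ produces $A^{1/p}B^{1/q}\le(M/m)^{1/pq}I$, which is exactly~\eqref{Yin-Qi-ineq-2.15}.

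The argument is short and requires no machinery beyond positivity and monotonicity of the delta integral; in particular H\"older's inequality (Lemma~\ref{holder-ineq-lem}) is not actually needed here. The point requiring the most care---and the one I would regard as the main obstacle---is the exponent bookkeeping: one must pair the lower bound involving $M^{-1/q}$ with the $I^{1/p}$ factor and the bound involving $m^{1/p}$ with the $I^{1/q}$ factor, so that the powers of $M$ and $m$ collapse to the single constant $(m/M)^{1/pq}$ and the remaining factors assemble into $A^{1/p}B^{1/q}$; the opposite pairing would instead yield $A^{1/q}B^{1/p}$ and miss the target. A secondary technical check is that all quantities are genuinely positive and finite, so that the fractional powers and the divisions by $f$ and $g$ are justified, and this follows from rd\nobreakdash-continuity and positivity on the compact interval $[a,b]$.
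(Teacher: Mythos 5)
Your proposal is correct and follows essentially the same route as the paper: the same two pointwise bounds $f^{1/p}g^{1/q}\ge M^{-1/q}f$ and $f^{1/p}g^{1/q}\ge m^{1/p}g$, integrated and then recombined via $\frac1p+\frac1q=1$ with exactly the pairing the paper uses (the $M$-bound with the exponent $1/p$, the $m$-bound with $1/q$). Your splitting $I=I^{1/p}I^{1/q}$ is just a restatement of the paper's step of raising the two integrated inequalities to the powers $1/p$ and $1/q$ and multiplying them, so there is nothing to correct.
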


\begin{proof}
From $\frac{f(x)}{g(x)}\le{M}$, it follows that
$$
g^{1/q}(x)\ge M^{-1/q}f^{1/q}(x)
$$
and
\begin{equation*}
f^{1/p}(x)g^{1/q}(x)\ge M^{-1/q}f^{1/q}(x)f^{1/p}(x)=M^{-1/q}f(x).
\end{equation*}
Accordingly,
\begin{equation}\label{ineq-Yin-3.5}
\biggl[\int_a^b
{f^{1/p}(x)g^{1/q}(x)\Delta x}\biggr]^{1/p}
\ge M^{-1/pq}\biggl[\int_a^b{f(x)\Delta x}\biggr]^{1/p}.
\end{equation}
On the other hand, from $\frac{f(x)}{g(x)}\ge{m}$, it follows easily that
\begin{equation}\label{ineq-Yin-3.6}
\biggl[\int_a^b{f^{1/p}(x)g^{1/q}
(x)\Delta x}\biggr]^{1/q}\ge m^{1/pq}
\biggl[\int_a^b{g(x)\Delta x}\biggr]^{1/q}.
\end{equation}
Multiplying~\eqref{ineq-Yin-3.5} and~\eqref{ineq-Yin-3.6} leads to~\eqref{Yin-Qi-ineq-2.15}.
\end{proof}

Replacing $f(x)$ and $g(x)$ respectively by $f^{p}(x)$ and $g^{q}(x)$ in Lemma~\ref{holder-ineq-ded-lem}, it is immediate to obtain the following conclusion.

\begin{lemma}\label{holder-ineq-final-lem}
Let $a, b\in\mathbb{T}$. If $f,g\in C_{rd}(\mathbb{T}, \mathbb{R})$ and
$$
0<m\le{\frac{f^{p}(x)}{g^{q}(x)}}\le{M}<\infty,
$$
then
\begin{equation}\label{Yin-Qi-ineq-2.16}
\biggl[\int_a^b{f^{p}(x)\Delta x}\biggr]^{1/p}\biggl[\int_a^b{g^{q}(x)\Delta x}\biggr]^{1/q} \le\biggl(\frac{M}{m}\biggr)^{1/pq}\int_a^b{f(x)g(x)\Delta x},
\end{equation}
where $p>1$ and $\frac{1}{p}+\frac{1}{q}=1$.
\end{lemma}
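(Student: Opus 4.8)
The plan is to obtain this lemma as an immediate corollary of Lemma~\ref{holder-ineq-ded-lem}, via the substitution indicated in the sentence preceding the statement. First I would apply Lemma~\ref{holder-ineq-ded-lem} not to $f$ and $g$ themselves but to the functions $f^{p}$ and $g^{q}$. Since $f,g\in C_{rd}(\mathbb{T},\mathbb{R})$ are positive, the powers $f^{p}$ and $g^{q}$ are again positive and remain rd\nobreakdash-continuous, so they constitute an admissible pair of inputs for Lemma~\ref{holder-ineq-ded-lem}.

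Next I would verify that the two\nobreakdash-sided bound transforms correctly. The hypothesis $0<m\le f^{p}(x)/g^{q}(x)\le M<\infty$ of the present lemma is precisely the condition $0<m\le \tilde f(x)/\tilde g(x)\le M<\infty$ required by Lemma~\ref{holder-ineq-ded-lem} for the choice $\tilde f=f^{p}$ and $\tilde g=g^{q}$. Consequently the conclusion~\eqref{Yin-Qi-ineq-2.15} applies verbatim to $\tilde f$ and $\tilde g$, giving
$$\biggl[\int_a^b f^{p}(x)\Delta x\biggr]^{1/p}\biggl[\int_a^b g^{q}(x)\Delta x\biggr]^{1/q}\le\biggl(\frac{M}{m}\biggr)^{1/pq}\int_a^b\bigl(f^{p}(x)\bigr)^{1/p}\bigl(g^{q}(x)\bigr)^{1/q}\,\Delta x.$$

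Finally I would simplify the integrand on the right\nobreakdash-hand side. Because $f$ and $g$ are positive, one has $\bigl(f^{p}\bigr)^{1/p}=f$ and $\bigl(g^{q}\bigr)^{1/q}=g$, so the integrand collapses to $f(x)g(x)$ and the displayed inequality becomes exactly~\eqref{Yin-Qi-ineq-2.16}. I do not expect any genuine obstacle in this argument: the entire content is the substitution, and the only points needing a line of care are that raising the positive rd\nobreakdash-continuous functions $f,g$ to the powers $p$ and $q$ preserves membership in $C_{rd}(\mathbb{T},\mathbb{R})$, and that the outer exponents $1/p$ and $1/q$ cancel the inner exponents $p$ and $q$ as claimed. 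Both are routine, which is why the conclusion is indeed immediate.
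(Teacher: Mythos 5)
Your proposal is correct and is exactly the paper's own argument: the paper proves this lemma by the single remark that replacing $f(x)$ and $g(x)$ by $f^{p}(x)$ and $g^{q}(x)$ in Lemma~\ref{holder-ineq-ded-lem} immediately yields the conclusion, which is the substitution you carry out (with the added, routine verifications that the hypothesis transfers and that $(f^{p})^{1/p}=f$, $(g^{q})^{1/q}=g$).
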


\begin{remark}
When $\mathbb{T}=\mathbb{R}$, Lemma~\ref{holder-ineq-final-lem} becomes~\cite[Theorem~2.1]{Saitoh-Tuan-Yamamoto-jipam-02-art80}.
\end{remark}

\section{ Main Results}

Now we are in a position to state and prove our main results.

\begin{theorem}\label{Yin-Qi-thm-1}
Let $a, b\in\mathbb{T}$. If $f\in C_{rd}(\mathbb{T}, \mathbb{R})$ is positive and
\begin{equation}\label{Yin-Qi-ineq-3.1}
\int_a^b{f(x)\Delta x\ge(b-a)^{p-1},}
\end{equation}
then
\begin{equation}\label{Yin-Qi-ineq-3.2}
\int_a^b{[f(x)]^p\Delta{x}\ge}\biggl[\int_a^b{f(x)\Delta{x}}\biggr]^{p-1},
\end{equation}
where $p>1$ or $p<0$.
\end{theorem}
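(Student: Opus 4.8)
The plan is to derive the desired inequality directly from Lemma~\ref{holder-ineq-deduction-lem} by making a clever choice of the auxiliary function $g$. Looking at the target inequality~\eqref{Yin-Qi-ineq-3.2}, I want the left-hand side to be $\int_a^b [f(x)]^p\,\Delta x$, so I should match this against the left-hand side $\int_a^b \frac{[f(x)]^p}{[g(x)]^{p/q}}\,\Delta x$ of~\eqref{Yin-Qi-ineq-2.14}. The simplest way to do this is to take $g(x)\equiv 1$, which is positive and rd-continuous, so that the denominator $[g(x)]^{p/q}$ becomes identically $1$.

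Let me carry this out. With $g(x)\equiv 1$, Lemma~\ref{holder-ineq-deduction-lem} gives
\begin{equation*}
\int_a^b [f(x)]^p\,\Delta x \ge \frac{\bigl[\int_a^b f(x)\,\Delta x\bigr]^p}{\bigl[\int_a^b 1\,\Delta x\bigr]^{p/q}} = \frac{\bigl[\int_a^b f(x)\,\Delta x\bigr]^p}{(b-a)^{p/q}},
\end{equation*}
since $\int_a^b 1\,\Delta x = b-a$. Now I use the relation $\frac{1}{p}+\frac{1}{q}=1$, which gives $\frac{p}{q}=p-1$, so the denominator is exactly $(b-a)^{p-1}$.

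\textbf{The final step} is to absorb the denominator using the hypothesis~\eqref{Yin-Qi-ineq-3.1}. I rewrite the exponent $p$ as $(p-1)+1$ and split off one factor:
\begin{equation*}
\frac{\bigl[\int_a^b f(x)\,\Delta x\bigr]^p}{(b-a)^{p-1}} = \biggl[\int_a^b f(x)\,\Delta x\biggr]^{p-1}\cdot \frac{\int_a^b f(x)\,\Delta x}{(b-a)^{p-1}}.
\end{equation*}
By the assumption $\int_a^b f(x)\,\Delta x \ge (b-a)^{p-1}$, the last fraction is at least $1$, and since $\int_a^b f(x)\,\Delta x>0$ the first factor is nonnegative, so the whole expression is bounded below by $\bigl[\int_a^b f(x)\,\Delta x\bigr]^{p-1}$, which is the claim.

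The argument is structurally short; the only points requiring care are that all the constituent quantities be positive so that the fractional powers are well-defined and the inequalities preserve their direction. \textbf{The main obstacle} is really just bookkeeping on the exponents and signs: one must verify that for the regime $p<0$ (where Lemma~\ref{holder-ineq-deduction-lem} still applies with the reversed Hölder inequality) the relation $p/q=p-1$ and the final absorption step are still valid, and that the positivity of $f$ keeps $\int_a^b f(x)\,\Delta x$ strictly positive so that raising it to the power $p-1$ and dividing by it cause no difficulty.
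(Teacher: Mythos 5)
Your proposal is correct and follows exactly the paper's own argument: the paper likewise applies Lemma~\ref{holder-ineq-deduction-lem} with $g(x)\equiv 1$ (writing $[f(x)]^p/1^{p-1}$, i.e.\ using $p/q=p-1$) and then absorbs the denominator $(b-a)^{p-1}$ via the hypothesis~\eqref{Yin-Qi-ineq-3.1}. Your additional remarks on positivity and on the $p<0$ case are sound and only make explicit what the paper leaves implicit.
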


\begin{proof}
Using Lemma~\ref{holder-ineq-deduction-lem} and the condition~\eqref{Yin-Qi-ineq-3.1}, we obtain
\begin{equation*}
\int_a^b{[f(x)]^p\Delta x=\int_a^b{\frac{{[f(x)]^p}}{{1^{p-
1}}}\Delta x}\ge}\frac{{\bigl[\int_a^b{f(x)\Delta x}\bigr]^p}}
{{\bigl[\int_a^b{1\Delta x}\bigr]^{p-1}}}
\ge\biggl[\int_a^b{f(x)\Delta x}\biggr]^{p-1}.
\end{equation*}
The proof of Theorem~\ref{Yin-Qi-thm-1} is complete.
\end{proof}

\begin{remark}
Letting $\mathbb{T}=\mathbb{R}$ and $p>1$ in Theorem~\ref{Yin-Qi-thm-1}, we deduce~\cite[Theorem~A]{Liu-Ngo-Huy-JMI-09-212}.
\end{remark}

\begin{theorem}\label{Yin-Qi-thm-2}
Let $a, b\in\mathbb{T}$. If $f(x)$ and $\sigma(x)$ are rd\nobreakdash-continuous and positive on $[a,b]$, $\Delta$-differentiable on $(a,b)$, $f(a)\ge{\mu(a)}\ge0$, and $ f^\Delta(x)\ge 1+
\sigma ^\Delta(x)$ for all $x\in(a,b)$, then the inequality
\begin{equation}\label{Yin-Qi-ineq-3.3}
\int_a^b{[f(x)]^{p+2}\Delta x\ge}\frac{1}{{(b-a)^{p-1}}}\biggl[\int_a^b{f(x)\Delta x}\biggr]^{p+1}
\end{equation}
is valid for $p\ge1$.
\end{theorem}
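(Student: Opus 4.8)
The plan is to decouple the target inequality \eqref{Yin-Qi-ineq-3.3} into two independent ingredients that combine purely algebraically: a lower bound on the linear integral $\int_a^b f(x)\,\Delta x$, and a reverse H\"older estimate applied with the \emph{shifted} exponent $p+2$. The virtue of this splitting is that one never has to compute the $\Delta$-derivative of a complicated auxiliary functional on $\mathbb{T}$, which would drag us through the awkward integral form of the chain rule in Lemma~\ref{Chain-Rule-lem}; instead the only monotonicity argument is applied to a simple first-order expression.

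For the first ingredient I would introduce $H(x)=f(x)-(x-a)-\sigma(x)$, which is a legitimate $\Delta$-differentiable function precisely because $\sigma$ is assumed $\Delta$-differentiable on $(a,b)$. Then $H^\Delta(x)=f^\Delta(x)-1-\sigma^\Delta(x)\ge 0$ by hypothesis, so Lemma~\ref{decrease-lem} forces $H$ to be non-decreasing and hence $H(x)\ge H(a)=f(a)-a-\mu(a)$. Since $f(a)\ge\mu(a)$, this rearranges to the pointwise bound
\[
f(x)\ge (x-a)+(\sigma(x)-a),\qquad x\in[a,b].
\]
The decisive observation is that the right-hand side equals $\bigl[(x-a)^2\bigr]^\Delta$, so that integrating and invoking the Cauchy integral gives $\int_a^b\bigl[(x-a)+(\sigma(x)-a)\bigr]\Delta x=(b-a)^2$ on \emph{every} time scale, with no graininess correction term. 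Consequently $\int_a^b f(x)\,\Delta x\ge(b-a)^2$.

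For the second ingredient I would apply Lemma~\ref{holder-ineq-deduction-lem} with $g\equiv1$ and exponent $p+2$ (admissible since $p+2>1$) to get $\int_a^b[f(x)]^{p+2}\Delta x\ge(b-a)^{-(p+1)}\bigl[\int_a^b f(x)\,\Delta x\bigr]^{p+2}$. Factoring $\bigl[\int_a^b f\bigr]^{p+2}=\bigl[\int_a^b f\bigr]^{p+1}\cdot\int_a^b f$ and inserting the bound $\int_a^b f\ge(b-a)^2$ absorbs exactly the two surplus powers of $(b-a)$ and yields \eqref{Yin-Qi-ineq-3.3}, uniformly for $p\ge1$ (the endpoint $p=1$ is harmless since then $(b-a)^{p-1}=1$). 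I expect the genuine obstacle to be the first ingredient: one must recognize that the precise hypotheses $f^\Delta\ge1+\sigma^\Delta$ and $f(a)\ge\mu(a)$ are calibrated so that the integrated lower bound is exactly $(b-a)^2$, since this is precisely the factor $\bigl(\int_a^b f\bigr)/(b-a)^2\ge1$ needed to promote the reverse H\"older estimate from the available exponent $p+2$ to the desired exponent $p+1$ in \eqref{Yin-Qi-ineq-3.3}.
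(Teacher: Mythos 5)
Your proposal is correct and takes essentially the same route as the paper: the identical application of Lemma~\ref{holder-ineq-deduction-lem} with $g\equiv1$ and exponent $p+2$, reduced to the identical key estimate $\int_a^b f(x)\Delta x\ge(b-a)^2$. The only difference is organizational: the paper obtains that estimate by $\Delta$-differentiating $F(x)=\int_a^x f(t)\Delta t-(x-a)^2$ twice and invoking Lemma~\ref{decrease-lem} twice, while you invoke Lemma~\ref{decrease-lem} once for $H(x)=f(x)-(x-a)-\sigma(x)$ (which is precisely $F^\Delta(x)-a$) and then integrate the resulting pointwise bound $f(x)\ge(x-a)+(\sigma(x)-a)$ via the Cauchy integral---the same computation with the order of differentiation and integration transposed.
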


\begin{proof}
By Lemma~\ref{holder-ineq-deduction-lem}, we obtain
\begin{align*}
\int_a^b [f(x)]^{p+2}\Delta x &=\int_a^b{\frac{{[f(x)]^{p+2}}}{{1^{p+1}}}\Delta x}\\*
&\ge \frac{{\bigl[\int_a^b{f(x)\Delta x}\bigr]^{p+2}}}{{\bigl(\int_a^b{1\Delta x}\bigr)^{p+1}}}\\
&=\frac{{\bigl[\int_a^b{f(x)\Delta x}\bigr]^{p+1}}}{{(b-a)^{p-1}}} \cdot\frac{{\int_a^b{f(x)\Delta x}}}{{(b-a)^2}}.
\end{align*}
So it is enough to show $\int_a^b{f(x)\Delta x\ge(b-a)^2}$. For this, let
$$
F(x)=\int_a^x{f(t)\Delta t-(x-a)^2}.
$$
A simple computation yields
\begin{equation*}
F^\Delta(x)=f(x)-[x-a+\sigma(x)-a]
\end{equation*}
and
\begin{equation*}
F^{\Delta\Delta}(x)=f^\Delta(x)-1-\sigma ^\Delta(x).
\end{equation*}
Since $F^{\Delta\Delta}(x)\ge 0$ and
$$
F^\Delta(a)=f(a)-[\sigma(a)-a]=f(a)-\mu(a)\ge 0,
$$
it follows that $ F^\Delta(x)
\ge F^\Delta(a)\ge 0$. Therefore, by Lemma~\ref{decrease-lem}, the function $F(x)$ is
increasing. From $F(a)=0$, it is easy to see that $F(x)\ge{F(a)}=0$. The proof is complete.
\end{proof}

\begin{remark}
If $\mathbb{T}=\mathbb{R}$ and $p\ge2$, then $\sigma(x)=x$, $\sigma^{\Delta}(x)=1$, $f(a)\ge0$, and $f^{\Delta}(x)\ge{p}\ge2$. Then Theorem~\ref{Akkouchi-thm} may be concluded from Theorem~\ref{Yin-Qi-thm-2}.
\end{remark}

\begin{remark}
If $\mathbb{T}=q\mathbb{Z}$ and $p\ge2$, then $\sigma(x)=x+q$, $\sigma^{\Delta}(x)=1$, $f(a)\ge{q}>0$, and $f^{\Delta}(x)\ge{p}\ge2$. Consequently, Theorem~\ref{Yin-Qi-thm-2} becomes Theorem~\ref{Brahim-Bett-Sell-08-thm}.
\end{remark}

\begin{remark}
When $p\ge2$, Theorem~\ref{Yin-Qi-thm-2} unifies both of Theorems~\ref{Akkouchi-thm} and~\ref{Brahim-Bett-Sell-08-thm}.
\end{remark}

\begin{theorem}\label{Yin-Qi-thm-3}
Let $a, b\in\mathbb{T}$ and $p>1$ with $\frac{1}{p}+\frac{1}{q}=1$. If $0<m\le{f^{p}(x)}\le{M}<\infty$ on $[a,b]$, then
\begin{equation}\label{Yin-Qi-ineq-3.4}
\biggl[\int_a^b{f^p(x)\Delta x}\biggr]^{1/p}\le(b-a)^{-(p+ 1)/q}\biggl(\frac{M}{m}\biggr)^{2/pq}\biggl[\int_a^b{f^{1/p}(x)\Delta x}\biggr]^p.
\end{equation}
\end{theorem}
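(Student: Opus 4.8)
The plan is to invoke Lemma~\ref{holder-ineq-final-lem} twice, each time with the constant choice $g\equiv1$, and then to multiply the two resulting estimates. The first application will bound $\bigl[\int_a^b f^p(x)\Delta x\bigr]^{1/p}$ above by a multiple of $\int_a^b f(x)\Delta x$, and the second will bound $\int_a^b f(x)\Delta x$ above by a multiple of $\bigl[\int_a^b f^{1/p}(x)\Delta x\bigr]^p$; chaining them produces the claim. The presence of the factor $(M/m)^{2/pq}$ in \eqref{Yin-Qi-ineq-3.4} is a strong hint that exactly two uses of the lemma are needed, each contributing one factor $(M/m)^{1/pq}$. Throughout I use that $\int_a^b1\,\Delta x=b-a$ and that $f>0$ (forced by $f^p\ge m>0$), so that $f^{1/p}\in C_{rd}(\mathbb{T},\mathbb{R})$ and the second application is legitimate.

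For the first step I take $g\equiv1$ in Lemma~\ref{holder-ineq-final-lem}. The hypothesis $0<m\le f^p(x)/1\le M$ is precisely the assumption of the theorem, so its conclusion reads
\begin{equation*}
\biggl[\int_a^b f^p(x)\Delta x\biggr]^{1/p}(b-a)^{1/q}\le\biggl(\frac{M}{m}\biggr)^{1/pq}\int_a^b f(x)\Delta x.
\end{equation*}

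For the second step I apply Lemma~\ref{holder-ineq-final-lem} with $f$ replaced by $f^{1/p}$ and again $g\equiv1$. The relevant ratio is now $(f^{1/p})^p/1=f(x)$, and the assumption $0<m\le f^p(x)\le M$ rewrites as $m^{1/p}\le f(x)\le M^{1/p}$, so the bounding constant becomes $(M^{1/p}/m^{1/p})^{1/pq}=(M/m)^{1/p^2q}$. This yields
\begin{equation*}
\biggl[\int_a^b f(x)\Delta x\biggr]^{1/p}(b-a)^{1/q}\le\biggl(\frac{M}{m}\biggr)^{1/p^2q}\int_a^b f^{1/p}(x)\Delta x,
\end{equation*}
and raising both sides to the $p$-th power gives
\begin{equation*}
\int_a^b f(x)\Delta x\le(b-a)^{-p/q}\biggl(\frac{M}{m}\biggr)^{1/pq}\biggl[\int_a^b f^{1/p}(x)\Delta x\biggr]^p.
\end{equation*}

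Finally I substitute this last bound into the first estimate: the powers of $(b-a)$ combine as $-1/q-p/q=-(p+1)/q$ and the two factors $(M/m)^{1/pq}$ multiply to $(M/m)^{2/pq}$, which is exactly \eqref{Yin-Qi-ineq-3.4}. I expect the only delicate point, and hence the main obstacle, to be the exponent bookkeeping in the second application: one must feed $f^{1/p}$ rather than $f$ into the lemma, correctly transport the hypothesis to $m^{1/p}\le f\le M^{1/p}$ so that the ratio contributes $(M/m)^{1/p^2q}$, and recover the clean exponent $1/pq$ only after taking the $p$-th power. Keeping track of these exponents across both applications is precisely what reproduces the sharp constants $(b-a)^{-(p+1)/q}$ and $(M/m)^{2/pq}$.
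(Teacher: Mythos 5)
Your proof is correct and follows essentially the same route as the paper: both derive the paper's two intermediate estimates \eqref{YIn-3.12} and \eqref{YIn-3.15} by specializing to $g\equiv1$ and then chain them. The only cosmetic difference is that for the second estimate you apply Lemma~\ref{holder-ineq-final-lem} to $f^{1/p}$, while the paper applies Lemma~\ref{holder-ineq-ded-lem} to $f$ directly; since the latter lemma becomes the former under the substitution $f\to f^p$, $g\to g^q$, these are literally the same step, with identical constants.
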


\begin{proof}
Putting $g(x)=1$ into Lemma~\ref{holder-ineq-final-lem} yields
\begin{gather}
\biggl[\int_a^b{f^p(x)\Delta x}\biggr]^{1/p}\biggl(\int_a^b{1\Delta x}
\biggr)^{1/q}\le\biggl(\frac{M}{m}\biggr)^{1/pq}
\int_a^b{f(x)\Delta x}, \notag\\
\biggl[\int_a^b{f^p(x)\Delta x}\biggr]^{1/p}\le\biggl(\frac{M}{m}\biggr)^{1/pq}(b- a)^{-1/q}\int_a^b{f(x)\Delta x}. \label{YIn-3.12}
\end{gather}
Substituting $g(x)=1$ in Lemma~\ref{holder-ineq-ded-lem} leads to
\begin{gather}
\biggl[\int_a^b{f(x)\Delta x}\biggr]^{1/p}\le(b-a)^{-1/q}
\biggl(\frac{M}{m}\biggr)^{1/p^2 q}\int_a^b{f^{1/p}
(x)\Delta x}, \notag\\
\int_a^b{f(x)\Delta x}\le(b-a)^{-p/q}
\biggl(\frac{M}{m}\biggr)^{1/pq}\biggl[\int_a^b{f^{1/p}
(x)\Delta x}\biggr]^p. \label{YIn-3.15}
\end{gather}
Combining \eqref{YIn-3.12} with \eqref{YIn-3.15}, the inequality~\eqref{Yin-Qi-ineq-3.4} follows.
\end{proof}

\begin{theorem}\label{Yin-Qi-thm-4}
If $f(x)$ is $\Delta$-differentiable on the closed interval $[a,b]$, $f(a)=0$, and $0<f^{\Delta}(x)<1$ for all $x\in(a,b)$, then the inequality
\begin{equation}\label{Yin-Qi-ineq-3.5}
2^{1-p}p\int_a^b f^{2p-1}(x)\Delta{x}<
\biggl[\int_a^b f(x)\Delta{x}\biggr]^p,
\end{equation}
holds for $p>1$.
\end{theorem}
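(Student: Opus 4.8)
The plan is to imitate the classical real-variable argument by introducing the auxiliary function
\[
G(x)=\biggl[\int_a^x f(t)\Delta t\biggr]^p-2^{1-p}p\int_a^x f^{2p-1}(t)\Delta t ,
\]
observing that $G(a)=0$ since $\int_a^a f\,\Delta t=0$ and $f(a)=0$, and then trying to show that $G$ is increasing on $[a,b]$, so that $G(b)>G(a)=0$, which is exactly \eqref{Yin-Qi-ineq-3.5}. By Lemma~\ref{decrease-lem} it suffices to prove $G^{\Delta}(x)\ge0$ on $(a,b)$, with strictness somewhere to upgrade the conclusion to a strict inequality. Throughout I will use that $f(x)>0$ for $x\in(a,b]$: because $f(a)=0$ and $f^{\Delta}>0$, Lemma~\ref{decrease-lem} gives $f(x)=\int_a^x f^{\Delta}(t)\Delta t>0$, and this positivity is what allows me to divide by powers of $f$ below.

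First I would differentiate $G$. Writing $P(x)=\int_a^x f\,\Delta t$ and applying the chain rule of Lemma~\ref{Chain-Rule-lem} to $u\mapsto u^p$, one obtains
\[
\bigl(P^p\bigr)^{\Delta}(x)=p\,f(x)\int_0^1\bigl[P(x)+h\mu(x)f(x)\bigr]^{p-1}\td h\ge p\,f(x)\,[P(x)]^{p-1},
\]
where the inequality uses $p-1>0$ together with $P(x)+h\mu(x)f(x)\ge P(x)$. Since $2^{1-p}f^{2p-2}=(f^2/2)^{p-1}$, this yields
\[
G^{\Delta}(x)\ge p\,f(x)\Bigl\{[P(x)]^{p-1}-\bigl[f^2(x)/2\bigr]^{p-1}\Bigr\},
\]
so, because $t\mapsto t^{p-1}$ is increasing, the problem is reduced to the pointwise comparison $\int_a^x f(t)\Delta t\ge f^2(x)/2$.

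To attack this reduced inequality I would set $H(x)=\int_a^x f(t)\Delta t-\tfrac12 f^2(x)$, note $H(a)=0$, and compute $H^{\Delta}$ using the product rule $(f^2)^{\Delta}=f^{\Delta}(f+f^{\sigma})$. Since $f^{\sigma}=f+\mu f^{\Delta}$, this gives
\[
H^{\Delta}(x)=f(x)\bigl(1-f^{\Delta}(x)\bigr)-\tfrac12\mu(x)\bigl[f^{\Delta}(x)\bigr]^2 .
\]
In the continuous setting $\mu\equiv0$, the second term vanishes and $H^{\Delta}=f(1-f^{\Delta})>0$ closes the argument at once. Thus the main obstacle is precisely the graininess term $-\tfrac12\mu(x)[f^{\Delta}(x)]^2$, which has the wrong sign and is absent over $\mathbb{R}$; indeed on a discrete time scale the reduced comparison can already fail near $a$, so controlling this term is the heart of the time-scale version. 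I would try to absorb it using the strict bound $f^{\Delta}<1$ (which also yields $f(x)<x-a$) and the slack present in $f(1-f^{\Delta})>0$; and if no clean pointwise estimate is available, I would replace the lossy lower bound $p\,f\,P^{p-1}$ by the full difference quotient $\bigl\{[P+\mu f]^p-P^p\bigr\}/\mu$ for $\mu>0$, so that the graininess is retained and works in my favour. Establishing $G^{\Delta}\ge0$ uniformly in spite of this term is the step I expect to demand the most care.
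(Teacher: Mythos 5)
Your proposal retraces the paper's own argument almost verbatim --- the same auxiliary function (your $G$ is the paper's $F$), the same chain-rule lower bound $\bigl(P^p\bigr)^{\Delta}\ge p\,f\,P^{p-1}$, and the same reduction to the pointwise comparison $\int_a^x f(t)\Delta t\ge\tfrac12 f^2(x)$ --- but as a proof it is incomplete, since you never establish $G^{\Delta}\ge0$. What makes your write-up valuable is that the step you refused to fudge is exactly where the paper's proof goes wrong. For your $H$ (the paper's second auxiliary function, also called $G$ there), the paper computes $G^{\Delta}=f-\frac12\bigl[f+f^{\sigma}\bigr]f^{\Delta}$ and then asserts that this is $\ge f-f f^{\Delta}$. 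That assertion requires $f^{\sigma}\le f$; but $f^{\Delta}>0$ makes $f$ increasing, so $f^{\sigma}=f+\mu f^{\Delta}\ge f$, with strict inequality at every right-scattered point. Equivalently, the paper silently discards the very term you isolated, $-\frac12\mu\bigl[f^{\Delta}\bigr]^2$, which is $\le0$, not $\ge0$. The paper's argument is therefore valid only where $\mu=0$, i.e., it proves the known case $\mathbb{T}=\mathbb{R}$ and nothing more.

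Moreover, your suspicion that the obstacle is not removable is correct: Theorem~\ref{Yin-Qi-thm-4} is false on time scales with positive graininess, so no estimate can close the gap. Take $\mathbb{T}=\mathbb{Z}$, $a=0$, $b=2$, and $f(0)=0$, $f(1)=c$, $f(2)=c+c'$ with $c,c'\in(0,1)$ (extend $f$ with slope $\tfrac12$ outside $[0,2]$); all hypotheses of the theorem hold. Since $\int_0^2 g(x)\Delta x=g(0)+g(1)$ on $\mathbb{Z}$, inequality~\eqref{Yin-Qi-ineq-3.5} becomes
\[
2^{1-p}p\,c^{2p-1}<c^{p},
\qquad\text{i.e.,}\qquad
p\Bigl(\frac{c}{2}\Bigr)^{p-1}<1 .
\]
But $\max_{p>1}p\,2^{1-p}=2/(e\ln2)\approx1.06>1$, so for $c$ close to $1$ and, say, $p=1.4$ one gets $p(c/2)^{p-1}\approx1.4\times(0.495)^{0.4}\approx1.06>1$, and the claimed inequality fails; concretely, $c=0.99$, $c'=\tfrac12$, $p=1.4$ makes the left-hand side of~\eqref{Yin-Qi-ineq-3.5} about $1.04$ and the right-hand side about $0.99$. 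So the missing step in your proposal is not a defect of your argument but a genuine error in the paper: the statement only survives in the setting you handled en route, namely $\mathbb{T}=\mathbb{R}$ (where it reduces to Lemma~3.5 of~\cite{L-Yin-Creative-11-95}), or more generally wherever the graininess term you flagged vanishes.
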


\begin{proof}
Let
\begin{equation*}
 F(x)=\biggl[\int_a^x{f(t)\Delta{t}}\biggr]^p-2^{1-p}p\int_a^x{f^{2p-1}
(t)\Delta{t}}, \quad x\in(a,b).
\end{equation*}
Applying Lemma~\ref{Chain-Rule-lem} and differentiating $F(x)$ with respect to $x$ give
\begin{align*}
F^\Delta(x) &=pg^\Delta(x)\int_0^1{\bigl[g(x)+\mu(x)hg^\Delta(x)\bigr]^{p-1}}\td h-p2^{1-p}f^{2p-1}(x) \\
 &\ge pg^\Delta(x)\int_0^1{[g(x)]^{p-1}}\td h-p2^{1-p}f^{2p-1}(x)\\
 &=pf(x)[g(x)]^{p-1}-p2^{1-p}f^{2p-1}(x),
\end{align*}
where $g(x)=\int_a^x{f(t)\Delta t}$ and $g^{\Delta}(x)=f(x)$.
\par
Since $f(a)=0$ and $f^{\Delta}(x)>0$, it is easy to see that $f(x)>f(a)=0$.
\par
It is clear that the inequality
$$
\biggl[\int_a^x{f(t)\Delta{t}}\biggr]^{p-1}-2^{1-p}f^{2p-2}(x)>0
$$
is equivalent to
\begin{equation*}
\biggl[\int_a^x{f(t)\Delta{t}}\biggr]^{p-1}>\biggl[\frac{f^2(x)}{2}\biggr]^{p-1}, \quad p>1.
\end{equation*}
Let
$$
G(x)=\int_a^x{f(t)\Delta{t}}-\frac{f^2(x)}{2}.
$$
Then, by Lemma~\ref{2.12-lem}, we have
\begin{align*}
G^{\Delta}(x)&=f(x)-\frac{1}{2}[f(x)+f^{\sigma}(x)]f^{\Delta}(x)\\*
&\ge{f(x)-f(x)f^{\Delta}(x)}\\
&=f(x)\bigl[1-f^{\Delta}(x)\bigr]\\
&>0.
\end{align*}
Taking into account $G(0)=0$, the inequality $G(x)>0$ follows. Thus, the proof of Theorem~\ref{Yin-Qi-thm-4} is complete.
\end{proof}

\begin{remark}
If $\mathbb{T}=\mathbb{R}$, Theorem~\ref{Yin-Qi-thm-4} reduces to~\cite[Lemma~3.5]{L-Yin-Creative-11-95}. If $\mathbb{T}=\mathbb{R}$ and $p=2$, Theorem~\ref{Yin-Qi-thm-4} becomes~\cite[Proposition 1.1]{Qi-several}.
\end{remark}


\begin{thebibliography}{99}

\bibitem{Agahi-Yaghoobi-Fuzzy-10-304}
H. Agahi and M. A. Yaghoobi, \textit{A Feng Qi type inequality for Sugeno integral}, Fuzzy Inf. Eng. \textbf{2} (2010), no.~3, 293\nobreakdash--304; Available online at \url{http://dx.doi.org/10.1007/s12543-010-0051-8}.

\bibitem{akk-div-mat-05-19}
M. Akkouchi, \textit{On an integral inequality of Feng Qi}, Divulg. Mat. \textbf{13} (2005), no.~1, 11\nobreakdash--19.

\bibitem{akk-several}
M. Akkouchi, \textit{Some integral inequalities}, Divulg. Mat. \textbf{11} (2003), no.~2, 121\nobreakdash--125.

\bibitem{Bohner-Peterson-01-B}
M. Bohner and A. Peterson, \textit{Dynamic Equations on Time Scales: An Introduction with Applications},
Birkh\"auser, Boston, 2001.

\bibitem{bougoffa-several-1}
L. Bougoffa, \textit{An integral inequality similar to Qi's inequality}, J. Inequal. Pure Appl. Math. \textbf{6} (2005), no.~1, Art.~27; Available online at \url{http://www.emis.de/journals/JIPAM/article496.html}.

\bibitem{bougoffa-several-2}
L. Bougoffa, \textit{Notes on Qi type integral inequalities}, J. Inequal. Pure Appl. Math. \textbf{4} (2003), no.~4, Art.~77; Available online at \url{http://www.emis.de/journals/JIPAM/article318.html}.

\bibitem{Brahim-Bett-Sell-08}
K. Brahim, N. Bettaibi, and M. Sellemi, \emph{On some Feng Qi type $q$-integral inequalities}, J. Inequal. Pure Appl. Math. \textbf{9} (2008), no.~2, Art.~43; Available online at \url{http://www.emis.de/journals/JIPAM/article975.html}.

\bibitem{Chai-Du-IJMA-10-1649}
X.-K. Chai and H.-X. Du, \textit{Several discrete inequalities}, Int. J. Math. Anal. \textbf{4} (2010), no.~33-36, 1645\nobreakdash--1649.

\bibitem{chen-kimball}
Y. Chen and J. Kimball, \textit{Note on an open problem of Feng Qi}, J. Inequal. Pure Appl. Math. \textbf{7} (2006), no.~1, Art.~4; Available online at
\url{http://www.emis.de/journals/JIPAM/article621.html}.

\bibitem{csismori}
V. Csisz\'ar and T. F. M\'ori, \textit{The convexity method of proving moment-type inequalities}, Statist. Probab. Lett. \textbf{66} (2004), no.~3, 303\nobreakdash--313.

\bibitem{Dahmani-Qi-other-ineq}
Z. Dahmani, \textit{Extensions for some integral inequalities of Qi type and other inequalities}, submitted.

\bibitem{Dahmani-Tabharit-ARSC-10-60}
Z. Dahmani and L. Tabharit, \textit{Certain inequalities involving fractional integrals}, J. Adv. Res. Sci. Comput. \textbf{2} (2010), no.~1, 55\nobreakdash--60.

\bibitem{Dahmani-Belarbi-IJNS-11}
Z. Dahmani and S. Belarbi, \textit{Some inequalities of Qi type using fractional integration}, Int. J Nonlinear Sci. (2011), in press.

\bibitem{Hong-IJMA-07-1247}
Y. Hong, \textit{A note on Feng Qi type integral inequalities}, Int. J. Math. Anal. \textbf{1} (2007), no.~25-28, 1243\nobreakdash--1247.

\bibitem{Krasniqi-Shabani-IJOPCSM-09-521}
V. Krasniqi and A. Sh. Shabani, \textit{On some Feng Qi type $h$-integral inequalities}, Int. J. Open Probl. Comput. Sci. Math. \textbf{2} (2009), no.~4, 516\nobreakdash--521.

\bibitem{Liu-Li-Dong-JMI-08-15}
W.-J. Liu, C.-C. Li, and J.-W. Dong, \textit{Consolidations of extended Qi's inequality and Bougoffa's inequality}, J. Math. Inequal. \textbf{2} (2008), no.~1, 9\nobreakdash--15.

\bibitem{Liu-Li-Dong-JMI-06-129}
W.-J. Liu, C.-C. Li, and J.-W. Dong, \textit{Note on Qi's inequality and Bougoffa's inequality}, J. Inequal. Pure Appl. Math. \textbf{7} (2006), no.~4, Art.~129; Available online at \url{http://www.emis.de/journals/JIPAM/article746.html}.

\bibitem{Liu-Ngo-Huy-JMI-09-212}
W.-J. Liu, Q.-A. Ng\^o, and V. N. Huy, \textit{Several interesting integral inequalities}, J. Math. Inequal. \textbf{3} (2009), no.~2, 201\nobreakdash--212.

\bibitem{mazqi}
S. Mazouzi and F. Qi, \textit{On an open problem regarding an integral inequality}, J. Inequal. Pure Appl. Math. \textbf{4} (2003), no.~2, Art.~31; Available online at \url{http://www.emis.de/journals/JIPAM/article269.html}.

\bibitem{Miao-Further-JIPAM-06-144}
Y. Miao, \textit{Further development of Qi-type integral inequality}, J. Inequal. Pure Appl. Math. \textbf{7} (2006), no.~4, Art.~144; Available online at \url{http://www.emis.de/journals/JIPAM/article763.html}.

\bibitem{Miao-Liu-BKMS-09-134}
Y. Miao and J.-F. Liu, \textit{Discrete results of Qi-type inequality}, Bull. Korean Math. Soc. \textbf{46} (2009), no.~1, 125\nobreakdash--134.

\bibitem{Miao-Qi-JMI-09-121}
Y. Miao and F. Qi, \textit{Several $q$-integral inequalities}, J. Math. Inequal. \textbf{3} (2009), no.~1, 115\nobreakdash--121.

\bibitem{Ngo-Tung-07}
Q. A. Ng\^o and P. H. Tung, \emph{Notes on an open problem of F. Qi and Y. Chen and J. Kimball}, J. Inequal. Pure Appl. Math. \textbf{8} (2007), no.~2, Art.~41; Available online at \url{http://www.emis.de/journals/JIPAM/article856.html}.

\bibitem{pejkovic-several-1}
J. Pe\v{c}ari\'c and T. Pejkovi\'c, \textit{Note on Feng Qi's integral inequality}, J. Inequal. Pure Appl. Math. \textbf{5} (2004), no.~3, Art.~51; Available online at \url{http://www.emis.de/journals/JIPAM/article418.html}.

\bibitem{pejkovic-several-2}
J. Pe\v{c}ari\'c and T. Pejkovi\'c, \textit{On an integral inequality}, J. Inequal. Pure Appl. Math. \textbf{5} (2004), no.~2, Art.~47; Available online at \url{http://www.emis.de/journals/JIPAM/article401.html}.

\bibitem{Peters-Fan-Sisson-arXiv-0808.3466}
G. W. Peters, Y. Fan, and S. A. Sisson, \textit{On sequential Monte Carlo, partial rejection control and approximate Bayesian computation}, Available online at \url{http://arxiv.org/abs/0808.3466v1}.

\bibitem{Pogany-JIPAM-02-54}
T. K. Pog\'any, \textit{On an open problem of F. Qi}, J. Inequal. Pure Appl. Math. \textbf{3} (2002), no.~4, Art.~54; Available online at \url{http://www.emis.de/journals/JIPAM/article206.html}.

\bibitem{Qi-several}
F. Qi, \textit{Several integral inequalities}, J. Inequal. Pure Appl. Math. \textbf{1} (2000), no.~2, Art.~19; Available online at \url{http://www.emis.de/journals/JIPAM/article113.html}.

\bibitem{Qi-several-RGMIA-99-1042}
F. Qi, \textit{Several integral inequalities}, RGMIA Res. Rep. Coll. \textbf{2} (1999), no.~7, Art.~9, 1039\nobreakdash--1042; Available online at \url{http://rgmia.org/v2n7.php}.

\bibitem{QLZ}
F. Qi, A.-J. Li, W.-Z. Zhao, D.-W. Niu, and J.-Cao, \emph{Extensions of several integral inequalities}, J. Inequal. Pure Appl. Math. \textbf{7} (2006), no.~3, Art.~107; Available online at \url{http://www.emis.de/journals/JIPAM/article706.html}.

\bibitem{Ineq-Yu-Qi-JMAAT.tex}
F. Qi and K.-W. Yu, \textit{Note on an integral inequality}, J. Math. Anal. Approx. Theory \textbf{2} (2007), no.~1, 96\nobreakdash--98.

\bibitem{Saitoh-Tuan-Yamamoto-jipam-02-art80}
S. Saitoh, V. K. Tuan, and M. Yamamoto, \textit{Reverse convolution inequalities and applications to inverse heat source problems}, J. Inequal. Pure. Appl. Math. \textbf{3} (2002), no.~5, Art.~80; Available online at \url{http://www.emis.de/journals/JIPAM/article232.html}.

\bibitem{Sarikaya-Ozkan-Yildirim-JIPAM-06-128}
M. Z. Sarikaya, U. M. Ozkan, and H. Yildirim, \textit{Time scale integral inequalities similar to Qi's inequality}, J. Inequal. Pure Appl. Math. \textbf{7} (2006), no.~4, Art.~128; Available online at \url{http://www.emis.de/journals/JIPAM/article745.html}.

\bibitem{Sulaiman-AM-11-469}
W. T. Sulaiman, \textit{A study on new $q$-integral inequalities}, Appl. Math. \textbf{2} (2011), no.~4, 465\nobreakdash--469; Available online at \url{http://dx.doi.org/10.4236/am.2011.24059}.

\bibitem{Sulaiman-AJMAA-10-6}
W. T. Sulaiman, \textit{Several $q$-integral inequalities}, Aust. J. Math. Anal. Appl. \textbf{7} (2010), no.~2, Art.~6; Available online at \url{http://ajmaa.org/cgi-bin/paper.pl?string=v7n2/V7I2P6.tex}.

\bibitem{Sun-RGMIA-04-542}
J.-S. Sun, \textit{A note on an open problem for integral inequality}, RGMIA Res. Rep. Coll. \textbf{7} (2004), no.~3, Art.~21, 539\nobreakdash--542; Available online at \url{http://rgmia.org/v7n3.php}.

\bibitem{Sun-Wu-College-M-08-128}
J.-S. Sun and Y.-Z. Wu, \emph{Note on an open problem of inequality}, College Math. (D\`axu\'e Sh\`uxu\'e) \textbf{24} (2008), no.~1, 126\nobreakdash--128.

\bibitem{Towghi-RGMIA-01-278}
N. Towghi, \textit{Notes on integral inequalities}, RGMIA Res. Rep. Coll. \textbf{4} (2001), no.~2, Art.~12, 277\nobreakdash--278; Available online at \url{http://rgmia.org/v4n2.php}.

\bibitem{Witkowski-JIPAM-05-36}
A. Witkowski, \textit{On a F. Qi integral inequality}, J. Inequal. Pure Appl. Math. \textbf{6} (2005), no.~2, Art.~36; Available online at \url{http://www.emis.de/journals/JIPAM/article505.html}.

\bibitem{Xi-Qi-Gen-private}
B.-Y. Xi, \textit{Some generalizations of Feng Qi type integral inequality}, submitted.

\bibitem{Yan-Gyllenberg-JIPAM-06-170}
P. Yan and M. Gyllenberg, \textit{On an open problem of integral inequalities}, J. Inequal. Pure Appl. Math. \textbf{7} (2006), no.~5, Art.~170; Available online at \url{http://www.emis.de/journals/JIPAM/article788.html}.

\bibitem{Yan-Gyllenberg-JIPAM-06-146}
P. Yan and M. Gyllenberg, \textit{On a conjecture of Qi-type integral inequalities}, J. Inequal. Pure. Appl. Math. \textbf{7} (2006), no.~4, Art.~146; Available online at \url{http://www.emis.de/journals/JIPAM/article760.html}.

\bibitem{L-Yin-Creative-11-95}
L. Yin, \textit{On several new Qi's inequalities}, Creat. Math. Inform. \textbf{20} (2011), no.~1, 90\nobreakdash--95.

\bibitem{Yu-Qi-RGMIA-01-25}
K.-W. Yu and F. Qi, \textit{A short note on an integral inequality}, RGMIA Res. Rep. Coll. \textbf{4} (2001), no.~1, Art.~4, 23\nobreakdash--25; Available online at \url{http://rgmia.org/v4n1.php}.

\end{thebibliography}
\end{document}